\numberwithin{equation}{section}
\newtheorem{thm}{Theorem}[section]
\newtheorem{theorem}{Theorem}
\newtheorem{lemma}[thm]{Lemma}
\newtheorem{corollary}[thm]{Corollary}
\theoremstyle{definition}
\newtheorem{definition}[thm]{Definition}
\newtheorem{notation}[thm]{Notation}
\theoremstyle{remark}
\newtheorem{remark}[thm]{Remark}
\newtheorem*{remark*}{Remark}
\newcommand{\R}{\mathbb R}
\newcommand{\De}{\Delta}
\newcommand{\ep}{\varepsilon}
\newcommand{\co}{\colon}
\newcommand{\pd}{\partial}
\renewcommand{\phi}{\varphi}
\renewcommand{\L}{\mathcal L}
\renewcommand{\tilde}{\widetilde}
\DeclareMathOperator{\grad}{grad}
\newcommand{\be}{\begin{equation}}
\newcommand{\ee}{\end{equation}}
\title[Boundary distance, lens maps and entropy]
{Boundary distance, lens maps and entropy of geodesic 
flows of Finsler metrics}
\author{Dmitri Burago}                                                          
\address{Dmitri Burago: Pennsylvania State University,                          
Department of Mathematics, University Park, PA 16802, USA}                      
\email{burago@math.psu.edu}                                                     
\author{Sergei Ivanov}
\address{Sergei Ivanov:
St.Petersburg Department of Steklov Mathematical Institute,
Russian Academy of Sciences,
Fontanka 27, St.Petersburg 191023, Russia}
\email{svivanov@pdmi.ras.ru}
\thanks{The first author was partially supported
by NSF grant DMS-1205997.
The second author was supported by 
Russian Foundation for Basic Research grant 14-01-00062.}
\subjclass{53C60, 37A35, 37J40}
\keywords{Finsler metric, boundary distance,
lens map, scattering relation, Hamiltonian flow,
perturbation, metric entropy}
\begin{document}

\begin{abstract}
We show that a small perturbation of the 
boundary distance function of a simple Finsler metric
on the $n$-disc
is also the boundary distance function of some Finsler metric.
(Simple metric form an open class containing all flat metrics.)
The lens map is map that sends the exit vector to the 
entry vector as a geodesic crosses the disc. 
We show that a small  perturbation of a lens map 
of a simple Finsler metric is in 
its turn the lens map of some Finsler metric.  
We use this result to construct a smooth
perturbation of the metric on the standard 4-dimensional
sphere to produce positive metric entropy
of the geodesic flow. 
Furthermore, this flow exhibits local generation
of metric entropy, 
that is positive entropy is generated in
arbitrarily small tubes around one trajectory. 
\end{abstract}

\maketitle

\section{Introduction}
\label{sec:intro}

In this paper we prove three theorems. 
At first glance, one of them may seem unrelated to the others, 
however it heavily depends on them. 
Rigorous formulations can be found below. 
Here we give a very informal description to provide the reader
with intuition. 

We deal with the following situation. We have a disc $D=D^n$.
This disc is equipped with a Finsler metric.
For a geometer, Finsler metrics generalize Riemannian ones.
As in the Riemannian case they allow to measure lengths of
tangent vectors. However the assumption that this length
comes from a quadratic form on the tangent space at every point
is dropped.
For people from dynamical systems or classical mechanics
Finsler metrics are quadratically homogeneous Lagrangians.
Thus a Finsler metric determines two structures:
a distance function on $D\times D$ and
a Lagrangian flow on $TD$.

We work with a class of ``nice'' Finsler metrics
called \textit{simple}.
The precise definition can be found below.
To get basic intuition, the reader may think of 
small smooth perturbations of a flat metric on a
Euclidean ball.

Given a simple Finsler metric,
we have a number of related objects:

First of all, we have distances between boundary 
points of the disc. This is
a function on $\pd D\times\pd D$
called the 
\textit{boundary distance function}
associated to the metric inside.

The first question that arises is: 
if we perturb the boundary distance function, can we find a metric
inside which realizes it? 

For Riemannian metrics, the answer is \textbf{no}.
There is a rather obscure obstruction relying
on the Besicovitch inequality \cite{Bes}.
Indeed, consider the standard Euclidean unit square.
Slightly decrease boundary distances between points
close to the opposite vertices. The distances between
the pairs of opposite sides remains~1, therefore
in the Riemannian case the area must be at least~1
by the Besicovitch inequality. On the other hand,
the main result of \cite{Iv13} implies a slight decrease
of the boundary distance function of a flat metric
results in a decrease of area. This shows that this
boundary distance function cannot be realized by
a Riemannian metric.

So far, the authors do not know if a small perturbation 
of the boundary distance function of a Riemannian metric 
in a neighborhood of a single pair of points in $\pd D\times\pd D$
can be  realized by a Riemannian metric. 
Here we ignore all distances between pairs of points outside 
this neighborhood. 
This seems to be a very intriguing problem.

We are particularly interested in this problem since our 
dynamical application requires a perturbation
localized near one geodesic. 
We do not know how to make it Riemannian, nor how to do that in low
dimension.

We can however prove that a sufficiently small
perturbation of the boundary
distance function of a simple Finsler 
metric is the boundary distance function of 
some Finsler metric. Moreover this Finsler metric
is a small perturbation of the original one.
See Theorem \ref{t:main} for the formulation.

The theorem contains two parts.
First we work with Finsler metrics without assuming reversibility.
The second part, which requires additional effort,
provides a construction respecting reversibility.
Reversibility of a Finsler metrics means that its
Finsler norms are symmetric.
Geometers often prefer to assume this symmetry 
for it gives rise to a usual metric space 
with a symmetric distance function.
In dynamics, there are many situations when 
one wants to look at non-reversible Finsler metrics. 
They arise in physics (e.g.\ magnetic fields) and correspond 
to non-reversible Lagrangian flows. 
Non-reversible Finsler metrics determine distance functions
that enjoy all axioms of distance other than the symmetry one. 

\medskip

Next we proceed to the lens map
(sometimes also called Poincare map
or scattering relation).
At this point we suggest the reader to think about
our disc equipped with a thin collar and some smooth 
extension of the metric to that collar.

Our Finsler metric determines a Lagrangian
flow whose Lagrangian is given by the square of
the Finsler norm. Thus we can look at trajectories.
They enter our disc. There are entry and exit points 
for each trajectory of the flow. Of course,
the flow is defined on the (unit) tangent bundle, 
hence the entry and exit points are tangent
vectors.

We would rather turn to the  Hamiltonian language, 
in which case we think about the entry 
and exit co-vectors, 
identified with tangent vectors by the Legendre transform.
Thus we get a scattering relation which sends 
the entry co-vector to the exit co-vector. 
It is referred to as the \textit{dual lens map}. 
This map is symplectic.

There is a very clear relationship between 
the dual lens map and the boundary 
distance function of a simple metric.
As discussed below, they uniquely determine
each other. Namely the derivative of the boundary
distance function with respect to its
first and second arguments at $p,q\in\pd D$ 
is essentially the same thing as the
entry and exit co-vectors corresponding to
the geodesic segment $[pq]$.
This follows from the formula for
the derivative of the distance function,
see \eqref{e:firstvar} and \eqref{e:firstvar1}.

Our second result Theorem \ref{t:lens}
shows that if one perturbs 
the dual lens map of a simple
Finsler metric in the class of symplectic maps, 
we also get the dual lens map of a simple 
Finsler metric.

\medskip

We hope that these results are of some interest on their own.
We also use them to give
partial answers to some classic problems in dynamical 
systems and entropy theory. Here
the word ``entropy" by default means 
the measure-theoretic or ``metric" entropy. 

In our final result Theorem \ref{t:entropy}
we show that the Riemannian metric of the standard 
4-dimensional sphere can be perturbed (in $C^\infty$)
in such a way that the geodesic flow on the unit
tangent bundle is of positive entropy.
This perturbation is in the class of Finsler metrics,
or, equivalently,
in the class of Lagrangian systems. 

The perturbation takes place in a small tube
around one trajectory of
the non-perturbed periodic flow.
This allows us to give an example in which
positive metric entropy is generated 
in arbitrarily small neighborhoods of
one periodic trajectory, see Corollary \ref{c:nonexpansive}.
There is no fixed terminology however many authors
call this property \textit{entropy non-expansiveness}.
This property has been of interest of a number
of mathematicians including 
Bowen, Knieper, Newhouse, the first author,
see e.g.\ \cite{Bowen}, \cite{Bu88}, \cite{Newhouse}.
In the literature this notion was mostly discussed
for topological entropy. Obviously local generation
of metric entropy implies that for topological entropy.

\medskip

Now we proceed with precise definitions and formulations.

\subsection*{Boundary distance}
Recall that a Finsler metric on a smooth manifold $M$
is a function $\phi\co TM\to\R_+$ which is smooth
outside the zero section and such that its
restriction on each fiber $T_xM$, $x\in M$,
is a Minkowski norm. The latter means that
the function
$\phi|_{T_xM}$ is positively homogeneous of degree~1,
positive outside zero, and its square is 
quadratically convex.

The word ``norm'' here is used in a slightly more
general sense than in functional analysis,
namely we do not assume that it is symmetric.
Finsler metrics whose norms are symmetric
(i.e., $\phi(-v)=\phi(v)$ for all $v\in M$)
are said to be \textit{reversible}.
Whenever we assume reversibily,
we mention this explicitly.

The value $\phi(v)$ is interpreted as the length
of a tangent vector $v\in TM$. Then one defines
the length of a smooth path and a
distance function $d_\phi\co M\times M\to\R$.
Similarly to norms, we consider non-symmetric 
distance functions.
By definition, the distance $d_\phi(x,y)$
between points $x,y\in M$ is the infimum of $\phi$-lengths
paths starting at $x$ and ending at~$y$.
%Since we do not assume that $\phi$ is reversible,
%$d_\phi$ is indeed not symmetric in general.
Note that $d_\phi$ satisfies the
triangle inequality:
$$
 d_\phi(x,y)+d_\phi(y,z) \ge d_\phi(x,z) .
$$
The distance function $d_\phi$ is symmetric if and only of
$\phi$ is reversible.

The \textit{boundary distance function} $bd_\phi$ of $\phi$
is the restriction of $d_\phi$
to $\pd M\times\pd M$.
That is, $bd_\phi(x,y)$ is the length of
a $\phi$-shortest path in $M$ connecting
the boundary points $x$ and~$y$. 

\textit{Geodesics} in a Finsler manifold are smooth
curves which are locally shortest paths.
>From dynamical viewpoint, geodesics are trajectories
of a Lagrangian system whose Lagrangian is $\frac12\phi^2$.
We only consider geodesics parametrized by arc length.

Let $D=D^n$ denote the $n$-dimensional disc.
A Finsler metric $\phi$ on $D$ is called \textit{simple} if
the following conditions hold:
\begin{enumerate}
\item Every pair of points in $D$ is connected 
 by a unique geodesic.
\item Geodesics depend smoothly on their endpoints.
\item The boundary is strictly convex, that is,
geodesics never touch it at their interior points.
\end{enumerate}
These conditions imply that
all geodesics are minimizing and have no conjugate points.

Our main result Theorem \ref{t:main} asserts
that every $C^\infty$-small perturbation
of the boundary distance function $bd_\phi$ can be
realized as the boundary distance function of a $C^\infty$-small
perturbation of $\phi$.

Note that the distance function is not smooth 
at the diagonal of $D\times D$. 
To avoid silly technicalities, we work in the following set-up.
We only consider perturbations of $bd_\phi$ which are
identical in a neighborhood of the diagonal $\Delta$
of $\pd D\times\pd D$,
and perturbations of $\phi$ identical near the boundary.

\begin{theorem}
\label{t:main}
Let $\phi$ be a simple Finsler metric on $D=D^n$, $f=bd_\phi$,
and $U_0$ a neighborhood of the diagonal $\De\subset\pd D\times\pd D$.
Then for every function $\tilde f\co\pd D\times\pd D\to\R$
such that $\tilde f|_{U_0}=f|_{U_0}$ and
$\tilde f$ is sufficiently $C^\infty$-close to $f$
on $\pd D\times\pd D\setminus U_0$
there exists a simple Finsler metric 
$\tilde\phi$ such that 
$bd_{\tilde\phi}=\tilde f$
and $\tilde\phi$ coincides with $\phi$ 
in a neighborhood of %the boundary 
$\pd D$.

The choice of $\tilde\phi$ can be made in such a way that
$\tilde\phi$ converges to $\phi$
whenever $\tilde f$ converges to $f$
(in $C^\infty$).

In addition, if $\phi$ is reversible and $\tilde f$ is symmetric,
then $\tilde\phi$ can be chosen reversible.
\end{theorem}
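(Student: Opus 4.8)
The plan is to deduce Theorem \ref{t:main} from Theorem \ref{t:lens}; the real work is hidden in the latter, and what remains is to translate the statement about boundary distance functions into one about dual lens maps and back. Recall the dictionary, whose analytic core is the first variation formula \eqref{e:firstvar}--\eqref{e:firstvar1}: for a simple metric $\phi$ and distinct boundary points $p,q$, the differential $d(bd_\phi)_{(p,q)}$ is exactly the pair consisting of (minus) the tangential part of the entry co-vector and the tangential part of the exit co-vector of the geodesic $[pq]$; conversely, a tangential co-vector at a boundary point, together with the requirement of having dual $\phi$-norm one and pointing inward (resp.\ outward), determines a unique full co-vector, and this last reconstruction uses $\phi$ only near $\partial D$. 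Thus one obtains a correspondence $g\mapsto L_g$ from functions on $\partial D\times\partial D$ to ``dual lens maps'', under which $L_{bd_\phi}$ is the dual lens map of $\phi$.

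First I would verify that $L_{\tilde f}$ is a well-defined symplectic diffeomorphism $C^\infty$-close to $L_\phi$, depending continuously on $\tilde f$. Well-definedness and closeness rest on strict convexity of $\partial D$ (geodesics of $\phi$ are never tangent to the boundary), which says that the tangential co-vectors produced by $f=bd_\phi$ have dual $\phi$-norm bounded away from $1$; hence the reconstruction of the full unit co-vectors remains valid, and varies continuously, under a $C^\infty$-small perturbation of the function. Symplecticity is the step that needs care, and is the main obstacle: one performs the symplectic reduction identifying the inward and the outward unit co-vectors over $\partial D$ with points of an open subset of $T^*\partial D$, under which $L_g$ becomes the canonical relation in $T^*\partial D\times T^*\partial D$ generated by $g$, namely the image of the graph of $dg$ under the identification $T^*(\partial D\times\partial D)\cong T^*\partial D\times T^*\partial D$. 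Such a relation is Lagrangian for every smooth $g$, so $L_g$ is a symplectic map wherever it is single-valued, which it is near the graph $L_\phi$. Finally, $\tilde f=f$ on $U_0$ forces $L_{\tilde f}=L_\phi$ on the region of phase space corresponding to geodesics between nearby boundary points --- precisely the locality hypothesis under which Theorem \ref{t:lens} (in the form in which it is proved) returns a metric that coincides with $\phi$ near $\partial D$.

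Applying Theorem \ref{t:lens} to $L_{\tilde f}$ then gives a simple Finsler metric $\tilde\phi$, which is $C^\infty$-close to $\phi$, equals $\phi$ near $\partial D$, has dual lens map $L_{\tilde f}$, and depends continuously on $L_{\tilde f}$ and hence on $\tilde f$. It remains to check $bd_{\tilde\phi}=\tilde f$. By the converse direction of the dictionary, equality of dual lens maps gives $d(bd_{\tilde\phi})_{(p,q)}=d\tilde f_{(p,q)}$ for all distinct $p,q\in\partial D$; since $(\partial D\times\partial D)\setminus\Delta$ is connected (for $n\ge2$), $bd_{\tilde\phi}-\tilde f$ is constant there. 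Letting $(p,q)\to\Delta$ and using that $\tilde f=f$ near $\Delta$ while $bd_{\tilde\phi}(p,q)\to0$ --- a short path near $\partial D$, where $\tilde\phi=\phi$, realizes a value tending to $0$ --- shows this constant is $0$. Hence $bd_{\tilde\phi}=\tilde f$, and the first two assertions of the theorem follow.

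For the last assertion, note that if $\phi$ is reversible then reversing the orientation of geodesics shows the dual lens map $L_\phi$ is invariant, as a relation, under the involution of the boundary phase space induced by $\xi\mapsto-\xi$; under the dictionary, symmetry of $\tilde f$ translates into the same invariance for $L_{\tilde f}$. Invoking the reversible version of Theorem \ref{t:lens} --- equivalently, running its construction $\Z/2$-equivariantly --- produces a reversible $\tilde\phi$, and the identification $bd_{\tilde\phi}=\tilde f$ goes through exactly as above.
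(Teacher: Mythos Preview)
Your proposal is circular. In this paper, Theorem \ref{t:lens} is \emph{deduced from} Theorem \ref{t:main}: given a symplectic perturbation $\tilde\sigma$ of the dual lens map, one shows the associated $1$-form $\lambda_{\tilde\sigma}$ is closed, integrates it to a function $\tilde f$ on $\partial D\times\partial D$ close to $bd_\phi$, and then invokes Theorem \ref{t:main} to produce $\tilde\phi$ with $bd_{\tilde\phi}=\tilde f$. Your plan reverses this implication, so the two theorems would prove each other and neither would be established. The phrase ``the real work is hidden in the latter'' hides exactly the problem: in the paper there is no independent proof of Theorem \ref{t:lens}, and you do not supply one either.

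Even setting circularity aside, your route would lose the case $n=2$: Theorem \ref{t:lens} is stated and proved only for $n\ge 3$ (the obstruction in dimension $2$ is described in Remark \ref{r:n=2}), whereas Theorem \ref{t:main} covers $n\ge 2$.

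The paper's actual proof of Theorem \ref{t:main} is direct and does not pass through lens maps. One starts from the enveloping function $F(p,x)=d_\phi(p,x)$ on $\partial D\times D$ and builds a perturbed enveloping function $\tilde F$ whose boundary values are $\tilde f$. Near the boundary this is done via the functions $H_p(x)=\max_{y\in S}\{\tilde f(p,y)-d_\phi(x,y)\}$: Lemma \ref{l:Hp} identifies $H_p$ along a family of perturbed geodesics $\gamma_{pq}$ determined by $d_q\tilde f(p,\cdot)$, and Lemma \ref{l:collar} glues these with $d_\phi(p,\cdot)$ to obtain a function $G$ on a collar $S\times V$ that is an enveloping function for $\phi$ itself and restricts to $\tilde f$ on $S\times S$. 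One then interpolates $\tilde F=hF+(1-h)G$ with a cut-off $h$; Lemma \ref{l:env-perturbation} guarantees $\tilde F$ is an enveloping function for some $\tilde\phi$ close to $\phi$, equal to $\phi$ near $\partial D$, and Lemma \ref{l:env-is-distance} identifies $\tilde F$ with $d_{\tilde\phi}(p,\cdot)$, so $bd_{\tilde\phi}=\tilde f$. The reversible case is handled by a separate symmetrization argument (decomposing the perturbation into pieces supported on $U_i\cup R(U_i)$ with $U_i\cap R(U_i)=\varnothing$ and correcting on one half), not by invoking a reversible version of Theorem \ref{t:lens}.
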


%\begin{theorem}
%\label{t:main}
%Let $\phi$ be a simple Finsler metric on $D=D^n$, $f=bd_\phi$,
%and $U_0$ a neighborhood of the diagonal $\De\subset\pd D\times\pd D$.
%Then there is a neighborhood $V_0$ of $\pd D$ in $D$ 
%and a neighborhood $V_f$ of $f$ in $C^\infty(\pd D\times\pd D)$
%such that the following holds.
%For every $\tilde f\in V_f$ such that $\tilde f=f$ on $U_0$
%there exists a Finsler metric
%$\tilde\phi$ such that $\tilde\phi=\phi$ on $V_0$ and 
%$bd_{\tilde\phi}=\tilde f$.
%
%Furthermore $\tilde\phi$ can be chosen close to $\phi$
%(in the $C^\infty$ topology)
%provided that $\tilde f$ is close to $f$.
%
%If $\phi$ is reversible and $\tilde f$ is symmetric,
%then $\tilde\phi$ can be chosen reversible.
%\end{theorem}

%Note that, even if $\phi$ is reversible and $\tilde f$ 
%is symmetric,
%the proof of Theorem \ref{t:main} does not provide a reversible 
%Finsler metric $\tilde\phi$.
%This however can be achieved by an additional argument.

\subsection*{Lens map}
Let $\phi$ be a Finsler metric on $D$.
We denote by $UTD$ the unit sphere bundle of $\phi$.
By $U_{in}$ we denote the set of unit vectors tangent to $D$
at the boundary $\pd D$ and such that they point inwards.
Analogously $U_{out}$ is the set of unit tangent vectors
at the boundary pointing outwards.

For a vector $v\in U_{in}$, we can look at the geodesic
with the initial vector $v$. As it hits the boundary again,
we get its velocity vector $\beta(v)\in U_{out}$.
This defines a map $\beta\co U_{in}\to U_{out}$.
This map is the \textit{lens map} of $\phi$
(sometimes also called \textit{Poincare map}
or \textit{scattering relation}).
If $\phi$ is reversible, then the lens map $\beta$
is reversible in the following sense:
$-\beta(-\beta(v))=v$ for every $v\in U_{in}$.

Now we proceed to our next Theorem \ref{t:lens}.
It answers the question: which perturbations of $\beta$
are realized as lens maps of Finsler metrics?
The answer is that the perturbed map should be symplectic.
To give a reasonably precise formulation we however need 
more definitions and notation.

First, to employ the symplectic structure, we need, as usual, 
to pass to the cotangent bundle $T^*D$.
Let $\phi^*\colon T^*D\to\R$ be the fiber-wise
dual norm to~$\phi$. That is, for $x\in D$
and $\alpha\in T^*_xD$, one defines
$$
 \phi^*(\alpha) = \sup\{ \alpha(v) \mid v\in UT_xM \} .
$$
We denote by $UT^*D$ the unit sphere bundle
of $\phi^*$.

Let $\L\co TD\to T^*D$ be the Legendre transform 
of the Lagrangian $\frac12\phi^2$.
Due to homogeneity of $\phi$, $\L$ is norm-preserving.
In particular it induces a diffeomorphism between $UTD$ and $UT^*D$.
For a tangent vector $v\in UT_xD$, its Legendre transform
$\L(v)$ is the unique co-vector $\alpha\in UT^*_xD$ 
such that $\alpha(v)=1$.

Then we introduce the dual lens map.  
Namely consider subsets $U^*_{in}=\L(U_{in})$ and
$U^*_{out}=\L(U_{out})$ of $UT^*D$.
The \textit{dual lens map} of $\phi$
is the map $\sigma\co U^*_{in}\to U^*_{out}$
defined by $\sigma=\L\circ\beta\circ\L^{-1}$
where $\beta$ is the lens map of $\phi$.

Note that $U_{in}^*$ and $U_{out}^*$ are $(2n-2)$-dimensional
submanifolds of $T^*D$. Consider the
restriction of the canonical symplectic 2-form of $T^*D$ to 
$U_{in}^*$ and $U_{out}^*$.
This restriction is non-degenerate and hence symplectic.
Indeed, recall that the geodesic flow of $\phi$
is Hamiltonian when considered on $T^*D$.
The non-degeneracy claimed above follows from the fact that
$U_{in}^*$ and $U_{out}^*$
lie in the energy level of this Hamiltonian flow
and transverse to this flow.
For the same reason the dual lens map
$\sigma$ is symplectic.

If $\phi$ is reversible then its dual lens map
is symmetric in the following sense:
$-\sigma(-\sigma(\alpha))=\alpha$
for all $\alpha\in U^*_{in}$.
Note that reversibility of $\phi$ implies that
$-\alpha\in U^*_{out}$ if $\alpha\in U^*_{in}$
and vice versa.

\begin{theorem}
\label{t:lens}
Assume that $n\ge 3$. 
Let $\phi$ be a simple metric on $D=D^n$
and $\sigma$ its dual lens map.
Let $W$ be the complement of a compact set
in $U^*_{in}$.

Then every sufficiently small symplectic perturbation
$\tilde\sigma$ of $\sigma$ such that
$\tilde\sigma|_W^{}=\sigma|_W^{}$
is realized by the dual lens map of
a simple metric $\tilde\phi$ which coincides with $\phi$ in some
neighborhood of $\partial D$.

The choice of $\tilde\phi$ can be made in such a way that
$\tilde\phi$ converges to $\phi$
whenever $\tilde\sigma$ converges to $\sigma$
(in $C^\infty$).

In addition, if $\phi$ is a reversible Finsler metric and
$\tilde\sigma$ is symmetric
in the sense that 
$-\tilde\sigma(-\tilde\sigma(\alpha))=\alpha$
for all $\alpha\in U^*_{in}$, then $\tilde\phi$
can be chosen reversible as well.
\end{theorem}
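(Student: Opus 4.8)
The plan is to deduce Theorem~\ref{t:lens} from Theorem~\ref{t:main} by passing through the (essentially bijective) dictionary between the boundary distance function and the dual lens map. Write $N=\pd D$ and $f=bd_\phi$, so $f$ is smooth on $N\times N\setminus\De$. Recall from \eqref{e:firstvar}, \eqref{e:firstvar1} that if $\ga$ is the unique $\phi$-geodesic from $p$ to $q$, then $-d_pf(p,q)$ is the restriction to $T_pN$ of the entry covector $\L(\dot\ga(0))\in U^*_{in}$ and $d_qf(p,q)$ is the restriction to $T_qN$ of the exit covector $\L(\dot\ga(1))\in U^*_{out}$. Since $\pd D$ is strictly convex, the maps $\iota_{in}\co U^*_{in}\to T^*N$, $\iota_{out}\co U^*_{out}\to T^*N$ sending a unit boundary covector to its footpoint together with its restriction to $TN$ are diffeomorphisms onto one and the same open domain $\Omega\subset T^*N$, and comparing Liouville forms gives $\iota_{in}^*\theta_N=\theta_D|_{U^*_{in}}$ and likewise for $\iota_{out}$, so both are symplectomorphisms for the symplectic structures of the statement. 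Let $\pi\co U^*_{in}\to N\times N\setminus\De$ send $\alpha$ to its footpoint paired with the exit point of the $\phi$-geodesic with entry covector $\alpha$; this is a diffeomorphism because $\phi$ is simple, and the first variation formulas say exactly that $\iota_{in}\circ\pi^{-1}=\bigl[(p,q)\mapsto(p,-d_pf(p,q))\bigr]$ and $\iota_{out}\circ\sigma\circ\pi^{-1}=\bigl[(p,q)\mapsto(q,d_qf(p,q))\bigr]$. Put $\hat\sigma_0:=\iota_{out}\circ\sigma\circ\iota_{in}^{-1}\co\Omega\to\Omega$; under the sign-flip identification $J\co((p,\xi),(q,\eta))\mapsto((p,-\xi),(q,\eta))$ of $T^*N\times T^*N$ with itself, which turns graphs of symplectomorphisms of $\Omega$ into $\omega_{can}$-Lagrangians of $T^*(N\times N)$, the above shows that $J^{-1}(\mathrm{graph}\,\hat\sigma_0)=\mathrm{graph}\,(df)$.

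Now let $\tilde\sigma$ be a small symplectic perturbation of $\sigma$ with $\tilde\sigma=\sigma$ off a compact $K\subset U^*_{in}$, and set $\hat\sigma:=\iota_{out}\circ\tilde\sigma\circ\iota_{in}^{-1}$, a symplectomorphism of $\Omega$ that is $C^\infty$-close to $\hat\sigma_0$ and equals it outside a compact set; since $K$ stays away from the tangential vectors, the difference is supported in a compact subset of $N\times N\setminus\De$ disjoint from a neighbourhood of $\De$. Then $J^{-1}(\mathrm{graph}\,\hat\sigma)$ is an $\omega_{can}$-Lagrangian submanifold of $T^*(N\times N)$ that is $C^\infty$-close to $\mathrm{graph}\,(df)$ and agrees with it near $\De$ and off a compact set; hence it is again a section, i.e.\ $\mathrm{graph}\,(\lambda)$ for a $1$-form $\lambda$ on $N\times N\setminus\De$, and the Lagrangian condition is exactly $d\lambda=0$. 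Since $n\ge3$, the space $N\times N\setminus\De\cong S^{n-1}\times S^{n-1}\setminus\De$ is simply connected, so the closed form $\lambda-df$, compactly supported and vanishing near the connected set $\De$, equals $dg$ for a function $g$ that may be chosen $C^\infty$-small with compact support disjoint from a neighbourhood of $\De$. Extending by $0$, set $\tilde f:=f+g$ on $N\times N$; then $\tilde f=f$ near $\De$, $\tilde f$ is $C^\infty$-close to $f$, and $d\tilde f=\lambda$, so $\hat\sigma(p,-d_p\tilde f(p,q))=(q,d_q\tilde f(p,q))$ for all $(p,q)\in N\times N\setminus\De$. This exactness step is the only place $n\ge3$ is used, and I expect it (and the attendant bookkeeping of supports near $\De$) to be the main point: for $n=2$ the period of $\lambda-df$ is a genuine obstruction.

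Apply Theorem~\ref{t:main} to $\tilde f$: we get a simple Finsler metric $\tilde\phi$ that equals $\phi$ near $\pd D$, satisfies $bd_{\tilde\phi}=\tilde f$, and tends to $\phi$ as $\tilde f\to f$. Since $\tilde\phi=\phi$ near $\pd D$, the objects $U^*_{in},U^*_{out},\iota_{in},\iota_{out},\Omega$ are unchanged; let $\tilde\pi\co U^*_{in}\to N\times N\setminus\De$ be the analogue of $\pi$ for $\tilde\phi$, again a diffeomorphism. The first variation formulas for $\tilde\phi$ give $\iota_{in}\circ\tilde\pi^{-1}=\bigl[(p,q)\mapsto(p,-d_p\tilde f(p,q))\bigr]$ (hence this map is a diffeomorphism onto $\Omega$) and, writing $\tilde\sigma'$ for the dual lens map of $\tilde\phi$, $\iota_{out}\circ\tilde\sigma'\circ\tilde\pi^{-1}=\bigl[(p,q)\mapsto(q,d_q\tilde f(p,q))\bigr]$. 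Given $\alpha\in U^*_{in}$, put $(p,q)=\tilde\pi(\alpha)$; then $\iota_{out}(\tilde\sigma(\alpha))=\hat\sigma(\iota_{in}\alpha)=\hat\sigma(p,-d_p\tilde f(p,q))=(q,d_q\tilde f(p,q))=\iota_{out}(\tilde\sigma'(\alpha))$, and injectivity of $\iota_{out}$ gives $\tilde\sigma'=\tilde\sigma$. The asserted convergence follows from that of Theorem~\ref{t:main} together with the continuous dependence of $\lambda$, hence of $g$ and $\tilde f$, on $\tilde\sigma$.

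For the reversible case let $\tau\co N\times N\to N\times N$ be $(p,q)\mapsto(q,p)$. If $\phi$ is reversible then $f$ is symmetric, $\tau^*f=f$. Assume $\tilde\sigma$ is symmetric, $-\tilde\sigma(-\tilde\sigma\alpha)=\alpha$. Using $\iota_{in}(-\beta)=(\,\text{footpoint}(\beta),-\iota_{out}(\beta)\,)$ for $\beta\in U^*_{out}$ and the analogous identity for $\iota_{out}$, a short computation (tracking that for $\alpha$ with $\tilde\pi$-image $(p,q)$ the covector $-\tilde\sigma(\alpha)$ has $\tilde\pi$-image $(q,p)$) shows that the $1$-form $\lambda$ attached to $\tilde\sigma$ satisfies $\tau^*\lambda=\lambda$. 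Since also $\tau^*df=df$, we may take $g$ with $dg=\lambda-df$ to be $\tau$-invariant, replacing it by $\tfrac12(g+g\circ\tau)$; then $\tilde f=f+g$ is symmetric. The reversible part of Theorem~\ref{t:main} now yields a reversible $\tilde\phi$, whose dual lens map is $\tilde\sigma$ by the previous paragraph. Thus the whole argument reduces, modulo Theorem~\ref{t:main}, to the generating-function dictionary of the first paragraph and the cohomological step of the second; no further new input is needed.
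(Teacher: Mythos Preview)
Your argument is correct and is essentially the same as the paper's proof. The paper packages the key step slightly differently---it defines the 1-form $\lambda_\sigma$ on $S\times S\setminus\Delta$ directly by \eqref{e:lambdasigma}, proves closedness by writing $\lambda_\sigma=-(P_\sigma^{-1})^*\nu+(Q_\sigma^{-1})^*\nu$ and using $\sigma^*\omega=\omega$ (Lemma~\ref{l:lambdaclosed}), and encodes the dictionary with the lens map as Lemma~\ref{l:lambda=df}---whereas you phrase the same content in the generating-function language of Lagrangian graphs in $T^*(N\times N)$; the exactness step for $n\ge 3$, the appeal to Theorem~\ref{t:main}, and the symmetry check in the reversible case are then identical in substance.
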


\begin{remark*}
The domain $U^*_{in}$ and range $U^*_{out}$
of the dual lens map depend on the metric $\phi$.
This is not an issue in Theorem \ref{t:lens}
because $\tilde\phi$ is required to coincide with $\phi$
in a neighborhood of $\pd D$,
and the sets $U^*_{in}$ and $U^*_{out}$ are determined
by the restriction of $\phi$ to $TD|_{\pd D}$.
\end{remark*}

\begin{remark*}
The assumption that $n\ge 3$ in Theorem \ref{t:lens}
is essential. In dimension~2 there is a single obstruction,
namely the map $\tilde\sigma$ should satisfy
certain integral identity, see Remark \ref{r:n=2}.
\end{remark*}

\subsection*{Positive entropy}
The previous two results are applied to prove the following:

\begin{theorem}
\label{t:entropy}
The standard metric of $S^4$ can be perturbed in the
class of reversible Finsler metrics so that the resulting metric
has positive metric entropy of its geodesic flow.
The perturbation can be chosen to be arbitrarily small in $C^\infty$.
\end{theorem}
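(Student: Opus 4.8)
The plan is to realize positive metric entropy by a Finsler perturbation supported in a thin tube around a single closed geodesic of $S^4$, using Theorem \ref{t:lens} to transfer the construction from the world of symplectic maps to the world of Finsler metrics. First I would fix a closed geodesic $\gamma$ on the round $S^4$ and a small $3$-disc $D^3$ transverse to $\gamma$, so that the first-return (Poincaré) map of the geodesic flow on the unit sphere bundle, restricted to a neighborhood of the point representing $\gamma$, is a symplectic map of a neighborhood of the origin in $\R^{6}\cong T^*D^3\times(\text{energy coordinates})$ fixing that point. The round metric is far from simple on all of $S^4$, but locally along a short subsegment of $\gamma$ we may pass to a small geodesically convex ball $B\subset S^4$ whose induced metric is simple on $B\cong D^4$; the closed geodesic $\gamma$ then crosses $B$ as a single chord, and the dual lens map $\sigma$ of this simple metric encodes exactly one ``pass'' of the return map. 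Modifying $\sigma$ near the chord of $\gamma$ therefore amounts to inserting a symplectic ``scatterer'' into the return map of the global flow.

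Next I would invoke the classical fact (going back to constructions producing nonzero Lyapunov exponents for area-preserving or symplectic maps — e.g. of Herman or Dolgopyat–Pesin type, or a direct cat-map-like linear shear composed with a cutoff) that there exists a symplectic diffeomorphism $\psi$ of a small ball in $\R^{2n-2}$, equal to the identity near the boundary of that ball, $C^\infty$-close to the identity, and whose composition with the hyperbolic/twisting return map produces a positive-measure set on which the cocycle has a positive Lyapunov exponent, hence positive metric entropy by the Pesin entropy formula (the Ruelle inequality gives the converse). The content of dimension $n=4$ here is exactly that $2n-2=6\ge 2$ is large enough that the integral obstruction of Remark \ref{r:n=2} does not appear, so $n\ge 3$ in Theorem \ref{t:lens} is available; this is why $S^4$ (and not $S^2$) is the target. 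I would then set $\tilde\sigma=\sigma$ composed with $\L\psi\L^{-1}$, a symplectic perturbation of the dual lens map agreeing with $\sigma$ outside a compact subset of $U^*_{in}$, and pull back reversibility: since the round metric is reversible and one can choose $\psi$ anti-conjugated by $v\mapsto -v$ appropriately (or symmetrize), the symmetry hypothesis $-\tilde\sigma(-\tilde\sigma(\alpha))=\alpha$ can be arranged, so Theorem \ref{t:lens} furnishes a reversible simple $\tilde\phi$ on $B$, agreeing with the round metric near $\partial B$, with dual lens map $\tilde\sigma$.

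Then I would glue: because $\tilde\phi$ equals the round metric in a neighborhood of $\partial B$, it extends to a global reversible Finsler metric on $S^4$, $C^\infty$-close to the round one, identical outside $B$. Its geodesic flow differs from the round one only by the modified chord-crossing, so the global first-return map to a hypersurface transverse to $\gamma$ is conjugate to the original return map \emph{post-composed with} the inserted symplectic scatterer; hence the local positive-entropy behavior of $\psi$ survives, and by the variational principle (metric entropy of the time-one map at least that of the return map suspended) the geodesic flow of $\tilde\phi$ has positive metric entropy. I would then need to check the two things that are genuinely not automatic. The first, and I expect the main obstacle, is \textbf{persistence of a positive-measure hyperbolic set under the passage through Theorem \ref{t:lens} and through the gluing}: the theorem controls $\tilde\phi$ in $C^\infty$ and controls $\tilde\sigma$ exactly, so the return map of $\tilde\phi$ really is the intended one; but one must verify that the measure-theoretic positive-exponent set of the \emph{model} map $\psi$ is robust, i.e. either produced by an explicitly hyperbolic block (a Smale-horseshoe-type or linear-hyperbolic core with a smooth cutoff, so positivity of entropy is structural, not delicate) or stable under the $C^\infty$-small discrepancy that remains — I would choose the former to sidestep fragility. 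The second point is checking that the local simplicity of the round metric on a small $B\subset S^4$ holds and that the chord of the closed geodesic $\gamma$ through $B$ is interior and transverse to $\partial B$, which is standard once $B$ is taken inside an injectivity-radius ball; and that the inserted scatterer can be localized away from the diagonal region and the boundary collar so the hypotheses ``$\tilde\sigma|_W=\sigma|_W$'' and ``coincides with $\phi$ near $\partial D$'' of Theorem \ref{t:lens} are met. Finally I would remark that since the perturbation lives in an arbitrarily thin tube around $\gamma$, the same argument yields Corollary \ref{c:nonexpansive} on local generation of metric entropy.
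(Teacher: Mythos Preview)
Your overall architecture is the same as the paper's: take a small geodesic ball $D\subset S^4$ (a simple disc), perturb its dual lens map by composing with a compactly supported symplectomorphism $\theta$ of a small $D^6$-chart in $U^*_{in}$, invoke Theorem~\ref{t:lens}, and glue. The paper also uses the crucial fact that the round geodesic flow is \emph{periodic}, so the unperturbed return map is the identity and the perturbed return map is exactly $\theta$ on that chart; since Theorem~\ref{t:lens} realizes $\tilde\sigma$ \emph{exactly} (not approximately), the suspension of $\theta$ sits inside the new flow on an invariant open set, and your robustness worry evaporates.

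The one place where the paper is more concrete than your proposal is the construction of $\theta$ (their Lemma~\ref{l:D6}), and this is precisely what dissolves your first caveat. You need a compactly supported symplectomorphism of $D^6$, $C^\infty$-close to the identity, with positive metric entropy \emph{on its own} (there is no ``hyperbolic/twisting return map'' to compose with---the round return map is the identity). Rather than a cat-map or Dolgopyat--Pesin scheme, the paper takes a genus~$\ge 2$ surface $M$ with a negatively curved metric, sets $N=M\times(-2,2)$, and cuts off the Hamiltonian $\tfrac12|\xi|^2_M$ on $T^*N$; on $B^*M\times B^*I$ the flow is the geodesic flow of $M$ crossed with the trivial flow, hence has positive metric entropy for structural reasons. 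Since $N$ embeds in $\R^3$ as a thickened handlebody, $T^*N$ embeds symplectically in $\R^6$, and after rescaling one obtains $\theta=\Phi^t\colon D^6\to D^6$ with all the required properties.
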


Note that the analogue of this result for topological entropy
is well-known, see \cite{P93} and \cite{KW94}.

\medskip
\textit{Acknowledgement}. 
We are grateful to Leonid Polterovich for interesting discussions.
In particular he suggested an alternative plan
for the final part of the proof of Theorem \ref{t:entropy}.
His approach is purely symplectic. (We do not use it in this paper.)
We are grateful to Federico Rodriguez Hertz, Gerhard Knieper 
and Sheldon Newhouse
for inspiring discussions and providing us with references.

\section{Preliminaries}
\label{sec:prelim}

The following preliminaries are copied 
with minor modifications from \cite{Iv13}.

Let $M=(M^n,\phi)$ be a Finsler manifold,
possibly with boundary.
In this paper we only consider the case when $M$ is either
the disc $D=D^n$ with a simple metric $\phi$
or an open subset of such a disc.

We say that a continuous function $f\colon M\to\R$
is \textit{distance-like} (with respect to $\phi$)
if $f$ is smooth on $M\setminus\pd M$ and
$\phi^*(d_xf)=1$ for all $x\in M$.
Note that every distance-like function $f$ is forward 1-Lipschitz:
$$
 f(y)-f(x) \le d_\phi(x,y)
$$
for all $x,y\in M$.

If $M=D$ and $\phi$ is a simple metric, then
for every $p\in\pd D$ 
the functions $d_\phi(p,\cdot)$ and $-d_\phi(\cdot,p)$
are distance-like.
Note that $d_\phi(\cdot,p)$ is \textit{not} distance-like
in general (unless the metric is reversible).

The Finslerian \textit{gradient} of a distance-like function
$f\co M\to\R$ at $x\in M$,
denoted by $\grad_\phi f(x)$,
is the unique
tangent vector $v\in U_xTM$ such that $d_xf(v)=1$.
Equivalently, $\grad_\phi f(x)=\L^{-1}(d_xf)$ 
where $\L$ is the Legendre
transform determined by $\phi$.
Beware that, unlike in the Riemannian case,
$\grad_\phi$ is a non-linear operator.

We introduce the following notation 
for a simple metric $\phi$ on $D$.
For distinct points $x,y\in D$, the unique arc length geodesic segment
starting at $x$ and ending at $y$
is denoted by $[xy]$.
By $v_{xy}$ and $w_{xy}$ we denote the velocity vectors
of this geodesic segment at $x$ and $y$, resp.
Thus $v_{xy}\in UT_xD$ and $w_{xy}\in UT_yD$.

Observe that 
the gradient $\grad_\phi d_\phi(x,\cdot)$ at $y$
equals $w_{xy}$.
Indeed, the directional derivative of $d_\phi(x,\cdot)$
along $w_{xy}$ equals~1. Thus the derivative
of the function $d_\phi(x,\cdot)$ at $y$ is given by
\be
\label{e:firstvar}
 d_y d_\phi(x,\cdot) = \L(w_{xy}) .
\ee
A similar argument for the distance-like function
$-d_\phi(\cdot,y)$ shows that
\be
\label{e:firstvar1}
 d_x d_\phi(\cdot,y) = -\L(v_{xy}) .
\ee
One can see that these formulas are special cases 
of the First Variation Formula
for Finslerian length.

\begin{definition}[cf.~{\cite{Iv13}}]
\label{d-env}
%Let $S=\pd D\simeq S^{n-1}$.
Let $M=(M^n,\phi)$ be a Finsler manifold and
$S$ a smooth manifold diffeomorphic to $S^{n-1}$.
We say that a continuous function
$F\co S\times M\to\R$ is an \textit{enveloping function}
for $\phi$ if $F$ is smooth outside $S\times\pd M$
and the following two conditions are satisfied.

(i) For every $p\in S$, the function $F_p:=F(p,\cdot)$
is distance-like;

(ii) For every $x\in M\setminus\pd M$, the map $p\mapsto d_xF_p$
is a diffeomorphism from $S$ to  $UT^*_xD$.
\end{definition}

Note that an enveloping function uniquely determines the metric.
Indeed, at every point $x\in M\setminus\pd M$
the unit co-tangent sphere $UT^*_xM$ can be recovered from $F$
as the image of $S$ under the map $p\mapsto d_xF_p$.
This unit sphere determines the dual norm $\phi^*|_{T_x^*M}$ and
hence the original norm $\phi|_{T_xM}$.

We need the notion of enveloping function for the following
situation.
Let $M=D$, $S=\pd D$ and assume that $\phi$ is simple.
Then the function $F\co S\times D\to\R$ given by
$F(p,x)=d_\phi(p,x)$
is an enveloping function for $\phi$.

The following lemma characterizes enveloping functions 
of this type.

\begin{lemma}\label{l:env-is-distance}
Let $F\co \pd D\times D\to\R$ be an enveloping function for $\phi$
such that the following holds.

(i) $F(p,p)=0$ for every $p\in\pd D$;

(ii) For every distinct $p,q\in\pd D$,
the function $F_p=F(p,\cdot)$ is smooth at $q$ and
$\grad_\phi F_p(q)$ points outwards~$D$.

\noindent
Then $F(p,x)=d_\phi(p,x)$ for all $p\in\pd D$ and $x\in D$.
\end{lemma}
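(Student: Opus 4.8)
The plan is to show that $F_p = F(p,\cdot)$ coincides with $d_\phi(p,\cdot)$ by exploiting the fact that a distance-like function is determined by its zero set together with a transversality/outward-pointing condition, using the method of characteristics for the eikonal equation $\phi^*(d_xf)=1$. First I would fix $p\in\pd D$ and consider the function $g = F_p - d_\phi(p,\cdot)$ on $D$. Since $F_p$ is distance-like it is forward $1$-Lipschitz, so $F_p(x) = F_p(x)-F_p(p) \le d_\phi(p,x)$, giving $g\le 0$ everywhere; this uses hypothesis (i). It remains to prove the reverse inequality $F_p(x)\ge d_\phi(p,x)$.

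The key step is to integrate the gradient of $F_p$ along geodesics emanating from $p$. Because $\phi$ is simple, for each $x\in D\setminus\{p\}$ there is a unique unit-speed geodesic $[px]$, and as $x$ ranges over $D$ these geodesics foliate $D\setminus\{p\}$; write $\gamma_x(t)$, $t\in[0,d_\phi(p,x)]$, for this geodesic. Along any distance-like $f$, the derivative of $t\mapsto f(\gamma(t))$ for a unit-speed curve $\gamma$ is $d_{\gamma(t)}f(\gamma'(t)) \le \phi^*(d_{\gamma(t)}f)\cdot\phi(\gamma'(t)) = 1$, with equality \emph{iff} $\gamma'(t) = \grad_\phi f(\gamma(t))$. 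So $F_p$ increases at unit rate along $\gamma_x$ precisely along the gradient flow lines of $F_p$. The idea is to show that the geodesics $\gamma_x$ \emph{are} the gradient flow lines of $F_p$: then $(F_p\circ\gamma_x)'\equiv 1$, and since $F_p(\gamma_x(0))=F_p(p)=0$ we get $F_p(x) = d_\phi(p,x)$, as desired.

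To identify the two flows, I would argue near $p$ first and then propagate. Hypothesis (ii) says $\grad_\phi F_p(q)$ points outward at every $q\in\pd D\setminus\{p\}$; since $F_p$ is $1$-Lipschitz and strictly convex boundary forces geodesics to leave $D$ transversally, this pins down $\grad_\phi F_p$ on $\pd D\setminus\{p\}$ to be the outward geodesic direction $w_{px}$ — equivalently $d_qF_p = \L(w_{px})$, matching \eqref{e:firstvar} for $d_\phi(p,\cdot)$. Now run the gradient flow of $F_p$ backward from the boundary: gradient lines of a distance-like function are themselves $\phi$-geodesics (the eikonal equation plus the Legendre transform give that $\grad_\phi F_p$ satisfies the geodesic equation — this is the standard Hamilton–Jacobi characteristic computation), so the backward gradient trajectory through $q$ is a geodesic hitting $\pd D$ at $q$ with outward velocity, which by uniqueness of geodesics in a simple metric must be exactly $[px]$ traversed backward to $p$. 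Thus every point of $D\setminus\{p\}$ lies on a gradient line of $F_p$ that starts at $p$, and the identification of the flows is complete.

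The main obstacle I anticipate is the gluing near the source point $p$: both $d_\phi(p,\cdot)$ and $F_p$ are non-smooth at $p$ (and the hypotheses only give information away from $p$ and at $p$ itself via $F(p,p)=0$), so one must check that the backward gradient trajectories of $F_p$ genuinely converge to $p$ and exhaust a full neighborhood of $p$, rather than accumulating on some smaller set or spiraling. This is where simplicity of $\phi$ does the real work — unique geodesics, smooth dependence on endpoints, and strict convexity of $\pd D$ together force the gradient foliation of $F_p$ to have the same combinatorial structure as the geodesic pencil from $p$ — but making this rigorous (e.g.\ via a degree or monodromy argument for the map $\pd D\setminus\{p\}\to UT_p^*D$ sending $q$ to the backward gradient direction at $p$) is the technical heart of the proof.
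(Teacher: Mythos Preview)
Your approach has the right kernel --- following backward gradient curves of the distance-like function $F_p$ --- but it contains a genuine gap, and the complication you flag as the ``main obstacle'' is an artifact of an unnecessary detour.

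The gap is the assertion that hypothesis (ii) ``pins down $\grad_\phi F_p$ on $\pd D\setminus\{p\}$ to be the outward geodesic direction $w_{pq}$''. Condition~(ii) says only that $\grad_\phi F_p(q)$ points outward; there are many outward-pointing unit vectors at $q$, and nothing you have written singles out $w_{pq}$ among them. (Indeed, the identity $\grad_\phi F_p(q)=w_{pq}$ is equivalent, via \eqref{e:firstvar}, to $d_qF_p=d_qd_\phi(p,\cdot)$, which is essentially what you are trying to prove.) Without this identification you cannot match the backward gradient trajectory from $q$ with the geodesic $[pq]$, and the rest of your argument does not get off the ground.

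The paper's proof avoids both this gap and your ``main obstacle'' by reversing the starting point of the flow. Rather than launching backward gradient curves from boundary points $q$ and worrying whether they reach $p$ and sweep out all of $D$, one starts the backward gradient curve $\gamma$ of $F_p$ from an \emph{arbitrary interior point} $x$. Since $F_p$ is distance-like, $\gamma$ is unit-speed and $F_p(\gamma(t))=F_p(x)+t$; combining this with the forward $1$-Lipschitz bound gives $F_p(x)-F_p(\gamma(t))=d_\phi(\gamma(t),x)$ for all $t\le 0$. Boundedness of $F_p$ then forces $\gamma$ to hit $\pd D$ in finite backward time, and condition~(ii) is used only in its weak form: at every $q\ne p$ the gradient points outward, so the backward trajectory cannot exit there. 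Hence $\gamma$ exits at $p$, and the calibration identity at that instant yields $F_p(x)=d_\phi(p,x)$ directly. No identification of gradient lines with the geodesic pencil, no Hamilton--Jacobi characteristic computation, and no analysis near the singular point $p$ is needed.
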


\begin{proof}
Let $p\in\pd D$ and $x\in D\setminus\pd D$.
%Since $F_p$ is forward 1-Lipschitz, we have $F_p(x)\le d_\phi(p,x)$.
%In order to prove the reverse inequality one constructs a curve 
%connecting $p$ to~$x$ whose length equals $F_p(x)$.
Consider the backward gradient curve of $F_p$ through $x$,
namely
let $\gamma\co(-t_0,0]\to D$ be the
maximal backward solution of the ODE 
$\dot\gamma(t)=\grad_\phi F_p(\gamma(t))$
with the initial condition $\gamma(0)=x$.

Since $F_p$ is distance-like, we have $\phi(\dot\gamma(t))=1$
and $\frac d{dt} F_p(\gamma(t))=1$ for all~$t$.
Hence $\gamma$ is parametrized by arc length
and $F_p(\gamma(t))=F_p(x)+t$ for all $t\in(-t_0,0]$.
Thus for every $t\le 0$ we have
$$
F_p(x)-F_p(\gamma(t))=-t
=\operatorname{length}_\phi(\gamma|_{[-t,0]}) \ge d_\phi(\gamma(t),x)
$$
for all $t\in(-t_0,0]$. On the hand, we have
$F_p(x)-F_p(\gamma(t))\le d_\phi(\gamma(t),x)$
since $F_p$ is forward 1-Lipschitz. Therefore
\be
\label{e:calibrator}
 F_p(x)-F_p(\gamma(t))=d_\phi(\gamma(t),x)
\ee
for all $t\in(-t_0,0]$.

Since $F_p$ is bounded, \eqref{e:calibrator} implies
that $\gamma$ is not trapped in $D$ and 
therefore eventually hits the boundary.
Due to (ii), the only point where it may hit
the boundary is~$p$.
Thus $\gamma(-t_0)=p$
and then \eqref{e:calibrator} for $t=-t_0$
implies that 
$$
F_p(x) = F_p(x)-F_p(p) = d_\phi(p,x) .
$$
The lemma follows.
\end{proof}

%Note that the condition on the gradient in Lemma \ref{l:env-is-distance}
%is preserved under small perturbations of $F$.

The key point of the proof of Theorem \ref{t:main}
is that a small perturbation of an enveloping function
is again an enveloping function for another Finsler metric.
The precise statement that we need is the following lemma
(borrowed from \cite{Iv13} with minor modifications).

\begin{lemma}\label{l:env-perturbation}
Let $\phi$ be a simple metric on $D$
and $F\co S\times D\to\R$ an enveloping function
for $\phi$.
Let $U\subset D$ be an open set separated from $\pd D$.
Then every function $\tilde F\co S\times U\to\R$
which is sufficiently $C^\infty$-close
to $F|_{S\times U}$
is an enveloping function of some 
Finsler metric $\tilde\phi$ on $U$.

Furthermore $\tilde\phi$ tends to $\phi|_U$ in $C^\infty$
as $\tilde F$ goes to $F$.
\end{lemma}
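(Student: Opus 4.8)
The plan is to reduce the statement about enveloping functions to a problem about individual fibers, where the condition we need to preserve is an open and convex condition on finitely many jets of $F$. Concretely, fix a point $x\in U$. The hypotheses on $F$ say that the map $p\mapsto d_xF_p$ is a diffeomorphism from $S$ onto the unit cotangent sphere $UT^*_xD$, and that this sphere is the boundary of a quadratically convex body (namely $\{\alpha : \phi^*(\alpha)\le 1\}$). Quadratic convexity of a smoothly embedded $(n-1)$-sphere in $T^*_xD\cong\R^n$ is governed by the positivity of a second fundamental form, which depends only on the $2$-jet of the embedding $p\mapsto d_xF_p$, hence only on the $1$-jet in $x$ of the map $p\mapsto F(p,\cdot)$ together with its $x$-derivatives of order up to two. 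This is manifestly an open condition in the $C^\infty$ (indeed $C^2$) topology on $F|_{S\times U}$. The same remark applies to condition (ii) of Definition~\ref{d-env}: being a diffeomorphism is $C^1$-open, provided the source $S$ is compact, which it is.

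First I would make this precise: I would show that if $F$ is an enveloping function and $\tilde F$ is $C^\infty$-close to $F$ on $S\times U$, then for each $x\in U$ the map $\tilde F$ still satisfies (i) the functions $\tilde F_p=\tilde F(p,\cdot)$ are smooth on $U$ (automatic, since $U$ is separated from $\pd D$ and closeness in $C^\infty$ is being assumed), and (ii) the map $p\mapsto d_x\tilde F_p$ is still an embedding of $S$ whose image bounds a quadratically convex body. The first point to check is that the image of $S$ under $p\mapsto d_x\tilde F_p$ is a smooth embedded sphere; closeness to the embedding $p\mapsto d_xF_p$ and compactness of $S$ give this. Then one needs to verify that the convex body it bounds is quadratically convex: this follows from persistence of the strict positivity of the relevant curvature form under a $C^2$-small perturbation, again using compactness of $S$ to get a uniform bound. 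Having checked (i) and (ii), one \emph{defines} $\tilde\phi$ at $x$ by declaring $UT^*_xD$ (for $\tilde\phi^*$) to be precisely the image sphere: as noted right after Definition~\ref{d-env}, a symmetric-about-origin quadratically convex body determines a Minkowski norm on $T^*_xD$, and dualizing gives $\tilde\phi|_{T_xU}$. Smoothness of $\tilde\phi$ in $x$ follows because the data vary smoothly in $x$.

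The remaining point is the convergence statement $\tilde\phi\to\phi|_U$ in $C^\infty$ as $\tilde F\to F$. This is a matter of continuity of the assignment ``enveloping function $\mapsto$ metric'' in the $C^\infty$ topology, which one reads off from the explicit recipe: $\tilde\phi^*|_{T_x^*U}$ is the Minkowski functional of the body bounded by $\{d_x\tilde F_p : p\in S\}$, and this functional, together with all its derivatives, depends continuously on the jets of $\tilde F$; then $\tilde\phi$ is obtained from $\tilde\phi^*$ by fiberwise Legendre duality, which is a smooth (hence $C^\infty$-continuous) operation on Minkowski norms that stay uniformly away from degeneracy. Uniform non-degeneracy on $U$ is again guaranteed by the openness/compactness argument above, provided $\tilde F$ is close enough to $F$.

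The step I expect to be the main obstacle is the honest verification that quadratic convexity (i.e. positive definiteness of the second fundamental form of the unit cosphere, equivalently quadratic convexity of $(\tilde\phi^*)^2$) is genuinely a \emph{uniformly} open condition across all $x\in U$ simultaneously, and that it persists under a perturbation controlled only on $S\times U$. One has to be careful that ``$U$ separated from $\pd D$'' is used so that the closure behaves well and all the jets one differentiates are taken at interior points where $F$ is smooth; and one has to phrase the perturbation bound uniformly, using compactness of $S$ and of $\overline U$. This is essentially the technical heart of Lemma~\ref{l:env-perturbation} and is exactly the part imported from \cite{Iv13}; everything else — passing from the perturbed family of cospheres back to a Finsler metric, and tracking $C^\infty$-continuity — is formal once this uniform openness is in hand.
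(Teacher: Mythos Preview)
Your proposal is correct and follows essentially the same route as the paper: the paper's proof is the same fiberwise argument---perturb the embedded quadratically convex cosphere $\{d_xF_p:p\in S\}$, define $\tilde\phi^*$ as the Minkowski norm whose unit sphere is the perturbed image, dualize, and observe that every step is $C^\infty$-continuous---only stated more tersely and without spelling out the uniformity. One small slip: the convex body need not be ``symmetric about the origin'' (the paper explicitly allows non-reversible Finsler metrics, so Minkowski norms here are only positively homogeneous); what matters is that the origin lies in the interior, which is still an open condition preserved under small perturbation.
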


\begin{proof}
Note that the derivatives of $F$ are bounded
on $S\times U$ since $U$ is contained in
a compact subset of $D\setminus\pd D$.
For every $x\in U$, the map 
$p\mapsto d_xF_p$ from $S$ to $T^*_xD$ 
parametrizes $UT^*_xD$ which is a quadratically 
convex surface in $T^*_xD$.
If $\tilde F$ is close to $F$ then 
the map $p\mapsto d_x\tilde F_p$
is close to $p\mapsto d_xF_p$
and hence it also parametrizes
a quadratically convex surface
close to the original one.
This surface is the unit sphere of some
Minkowski norm on $T^*_xD$.
The dual norm on $T_xD$ is the desired metric 
$\tilde\phi$ at~$x$.
The entire construction is continuous with respect
to relevant $C^\infty$ topologies.
\end{proof}

%A \textit{gradient curve} of $f$ is a curve
%$\gamma\colon[a,b]\to D$ such that
%$\dot\gamma(t)=\grad_\phi(\gamma(t))$ 
%for all $t\in[a,b]$. Clearly every gradient curve
%of a distance-like function is a minimizing geodesic.

\section{Proof of Theorem \ref{t:main}}

Let $n\ge 2$, $D=D^n$ and $S=\pd D$.
Let $\phi$, $f$ and $\tilde f$ be as in Theorem \ref{t:main}.
Recall that $f$ is the boundary distance function of $\phi$
and $\tilde f$ a perturbation of $f$ supported outside
a neighborhood $U_0$ of the diagonal $\De\subset S\times S$.

%We fix an auxiliary Riemannian metric on $D$ and 
%denote by $|x,y|$ or $|xy|$ the distance between points $x,y\in D$
%in this metric.
For $r>0$ and $p\in D$, 
let $B_r(p)$ denote the forward $d_\phi$-ball of radius $r$
centered at~$p$, namely $B_r(p)=\{x\in D: d_\phi(p,x)<r\}$.
By $U_r(S)$ we denote the forward $r$-neighborhood
of the boundary, that is,
$U_r(S)=\bigcup_{p\in S} B_r(p)$.

Since $\tilde f$ coincides with $f=bd_\phi$
in a neighborhood $U_0$ of the diagonal,
there exists $\ep>0$ be such that $\tilde f(x,y)=f(x,y)=d_\phi(x,y)$ 
for all $x,y\in S$ with $d_\phi(x,y)<5\ep$.
Fix fix this $\ep$ for the rest of this section.

Define a function $F\co S\times D\to\R$ 
by $F(p,x)=d_\phi(p,x)$.
Then $F$ is an enveloping function for $\phi$.
In order prove Theorem \ref{t:main},
we construct 
an enveloping function $\tilde F\co S\times D\to\R$
for the desired metric $\tilde\phi$.
We make $\tilde F$ out of $F$ and a function
$G$ which takes care of a neighborhood of the boundary
(see Lemma \ref{l:collar} below).

Fix $p\in S$ and define 
a function $H_p\co D\to\R$ by
\begin{equation}
\label{e:H}
 H_p(x) = \max_{y\in S} \{ \tilde f(p,y) - d_\phi(x,y) \} .
\end{equation}
Note that, if $\tilde f=f$ then $H_p(x)=d_\phi(p,x)$
and the maximum in \eqref{e:H} is attained at 
the (unique) point $y\in S$ such that $x\in[py]$.

Though $H_p$ is defined on the whole disc $D$,
we are going to restrict it to the set $U_\delta(S)\setminus B_\ep(p)$
where $\delta$ is a sufficiently small positive constant
(depending on $\phi$ and $\ep$).

For every $q\in S\setminus\{p\}$, we define a tangent
vector $\tilde w_{pq}\in UT_qD$ pointing outwards $D$,
as follows. 
Consider the derivative $d_q\tilde f(p,\cdot)\in T^*_qS$.
If $\tilde f=f$, the $\phi$-norm of this derivative
is strictly less than~1. This follows from \eqref{e:firstvar}
and the fact that the geodesic $[pq]$ is transverse to the boundary.
Hence the $\phi^*$-norm of this derivative is less than~1
whenever $\tilde f$ is sufficiently close to $f$ (in $C^\infty$,
on $S\times S\setminus U_0$).
Hence there exists 
a unique vector $w\in UT_qD$ pointing outwards $D$ and
such that $\L(w)|_{T_qS}=d_q\tilde f(p,\cdot)$.
We take this vector $w$ for $\tilde w_{pq}$.
Note that $\tilde w_{pq}$ 
depends smoothly on $p$, $q$ and $\tilde f$.

In the case when $\tilde f=f$ we have $\tilde w_{pq}=w_{pq}$ 
by \eqref{e:firstvar} where $w_{pq}$ it the velocity vector
of the geodesic $[pq]$ at $q$
(see the notation introduced in Section \ref{sec:prelim}).
Therefore $\tilde w_{pq}$ tends to $w_{pq}$ 
(in $C^\infty$, as a function of $p$ and $q$),
and $\tilde w_{pq}=w_{pq}$ if $d_\phi(p,q)<5\ep$.

For every distinct $p,q\in S$ 
and $t\ge 0$ 
define
$\gamma_{pq}(-t)=\exp_q(-t\tilde w_{pq})$
where
$\exp_q$ is the exponential map of the metric $\phi$ at $q$.
That is, $\gamma_{pq}$ is a geodesic which
hits the
boundary at $q$ with velocity $\tilde w_{pq}$.
This geodesic is parametrized so that $q=\gamma_{pq}(0)$.
The domain of $\gamma_{pq}$ is an interval of the form
$[-T,0]$ where $T$ depends smoothly on $p$, $q$ and $\tilde f$.
Hence $T\ge \min\{d_\phi(p,q),\ep\}$ provided that $\tilde f$
is sufficiently close to $f$.

In the case when $\tilde f=f$ we have $\gamma_{pq}=[pq]$.
Since $\tilde w_{pq}$ is a $C^\infty$-small perturbation of $w_{pq}$,
it follows that 
the geodesics $\{\gamma_{pq}\}_{q\in S\setminus\{p\}}$
can intersect only within the ball $B_\ep(p)$
and cover the set $D\setminus B_\ep(p)$.

\begin{lemma}
\label{l:Hp}
Let $p,q\in S$ and $t\ge 0$ be such that 
$\gamma(-t)\in D\setminus B_\ep(p)$.
Then
\be
\label{e:Hp}
 H_p(\gamma_{pq}(-t)) = \tilde f(p,q)-t ,
\ee
provided that $\tilde f$ is sufficiently close to $f$.
\end{lemma}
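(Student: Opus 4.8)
The plan is to establish \eqref{e:Hp} by proving the two inequalities $H_p(\gamma_{pq}(-t)) \ge \tilde f(p,q)-t$ and $H_p(\gamma_{pq}(-t)) \le \tilde f(p,q)-t$ separately, using the defining formula \eqref{e:H} for $H_p$. Write $x = \gamma_{pq}(-t)$. For the lower bound, I would simply plug $y=q$ into the maximum defining $H_p(x)$: this gives $H_p(x) \ge \tilde f(p,q) - d_\phi(x,q)$, so it suffices to check that $d_\phi(x,q) = t$. But $x$ and $q$ are joined by the geodesic arc $\gamma_{pq}|_{[-t,0]}$, which is parametrized by arc length (since $\tilde w_{pq}$ is a unit vector and $\gamma_{pq}$ is a geodesic), so $\operatorname{length}_\phi(\gamma_{pq}|_{[-t,0]}) = t$; thus $d_\phi(x,q) \le t$. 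Equality holds because $\gamma_{pq}$ is a $C^\infty$-small perturbation of the minimizing geodesic $[pq]$ of the simple metric $\phi$, hence still minimizing between its endpoints for the relevant range of parameters (using simplicity of $\phi$, the non-existence of conjugate points, and that $x \notin B_\ep(p)$ so we are not too close to $p$). This yields the lower bound.

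For the upper bound I must show that for every $y \in S$, $\tilde f(p,y) - d_\phi(x,y) \le \tilde f(p,q) - t$, i.e. $\tilde f(p,y) - \tilde f(p,q) \le d_\phi(x,y) - t = d_\phi(x,y) - d_\phi(x,q)$. I would split into two cases according to whether $d_\phi(p,q)$ and $d_\phi(p,y)$ are both less than $5\ep$ or not. When they are, $\tilde f$ agrees with $f = d_\phi$ near $p$, so the inequality becomes $d_\phi(p,y) - d_\phi(p,q) \le d_\phi(x,y) - d_\phi(x,q)$, and since $x$ lies on the $\phi$-geodesic from $p$ to $q$ at distance $t$ from $q$ (equivalently $d_\phi(p,x) = d_\phi(p,q) - t$), the triangle inequality gives $d_\phi(p,y) \le d_\phi(p,x) + d_\phi(x,y) = d_\phi(p,q) - t + d_\phi(x,y)$, which is exactly what is needed. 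In the general case I expect to argue by a compactness/continuity argument: the inequality we want is a closed condition that holds with a definite margin when $\tilde f = f$ (where $H_p(x) = d_\phi(p,x)$ and the max in \eqref{e:H} is attained uniquely at the point $y$ with $x \in [py]$, with the attaining being "non-degenerate" in $y$), so by continuity it persists under sufficiently small $C^\infty$ perturbations, uniformly over the compact parameter set $\{(p,q,t) : \gamma_{pq}(-t) \in D \setminus B_\ep(p)\}$.

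The main obstacle is the upper bound in the region where $\tilde f \ne f$, i.e. where $p$ and $y$ (or $q$) are far apart on the boundary. There the argument cannot be purely metric and must use that $\tilde f$ is close to $f$; the delicate point is making the continuity/compactness argument genuinely uniform, which requires knowing that when $\tilde f = f$ the maximum in \eqref{e:H} is attained at a unique $y$ and that this is a strict maximum in a quantitative sense (so that nearby competitors $y' \in S$ are strictly worse by an amount bounded below away from the diagonal). This strictness follows from the strict convexity of $\pd D$ together with simplicity of $\phi$: for $y' \ne y$ on $S$, the broken path $p \to x$ (along $[py]$) then $x \to y'$ is strictly longer than $d_\phi(p,y')$ because it is not a geodesic, and one must track this gap as $x$ ranges over the compact set $D \setminus B_\ep(p)$. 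Once this quantitative strictness is in hand, the perturbation $\tilde f$ being $C^\infty$-close to $f$ (hence $C^0$-close, with small Lipschitz error) closes the estimate, and the smooth dependence of $\tilde w_{pq}$ and $\gamma_{pq}$ on the data guarantees $x = \gamma_{pq}(-t)$ varies the way we need.
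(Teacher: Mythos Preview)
Your lower bound is fine and matches the paper's implicit use of $d_\phi(x,q)=t$. The gap is in the upper bound. A compactness/continuity argument of the kind you sketch cannot by itself pin the maximizer at $q$ exactly: perturbing a function with a non-degenerate maximum at $q$ only guarantees a non-degenerate maximum \emph{near} $q$, and its value could drift above $\tilde f(p,q)-t$. Your phrase ``$x=\gamma_{pq}(-t)$ varies the way we need'' hides the missing step. What is needed is the first-order observation that $y=q$ is always a \emph{critical point} of $y\mapsto\tilde f(p,y)-d_\phi(x,y)$: indeed $d_q d_\phi(x,\cdot)|_{T_qS}=\L(w_{xq})|_{T_qS}=\L(\tilde w_{pq})|_{T_qS}=d_q\tilde f(p,\cdot)$ by the very definition of $\tilde w_{pq}$. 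Once you know $q$ is critical with value $\tilde f(p,q)-t$, the non-degeneracy (inherited from $\tilde f=f$) makes it a local max, and your compactness handles the region away from $q$.

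The paper does exactly this first-order computation, but more efficiently and in the reverse direction: it takes an arbitrary maximizer $y$, writes down the first-order condition $d_y\tilde f(p,\cdot)=\L(w_{xy})|_{T_yS}$, recognizes this as the defining equation $w_{xy}=\tilde w_{py}$, hence $x\in\gamma_{py}$, and then invokes the previously established fact that the geodesics $\{\gamma_{py}\}_{y\in S}$ are pairwise disjoint outside $B_\ep(p)$ to force $y=q$. This avoids splitting into upper and lower bounds, avoids any case analysis on distances, and replaces your soft perturbation argument by the clean geometric statement that the $\gamma_{py}$'s foliate $D\setminus B_\ep(p)$.
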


\begin{proof}
Let $x=\gamma_{pq}(-t)$.
Then $d_\phi(x,q)=t$.
Let $y\in S$ 
be a point of maximum in \eqref{e:H}.
Since the derivative at a point of maximum is zero,
we have
$$
 d_y (\tilde f(p,\cdot) - d_\phi(x,\cdot))|_{T_yS} = 0.
$$
Hence
\begin{equation}
\label{e:xi}
 d_y\tilde f(p,\cdot) = d_yd_\phi(x,\cdot)|_{T_yS}=\L(w_{xy})|_{T_yS}
\end{equation}
where the second equality follows from \eqref{e:firstvar}.
This implies that $w_{xy}=\tilde w_{py}$,
therefore $x$ belongs to the geodesic $\gamma_{py}$.
This implies that $y=q$ because otherwise the geodesics
$\gamma_{pq}$ and $\gamma_{py}$ do not intersect outside $B_\ep(p)$.

Thus $q$ is a point of maximum in \eqref{e:H}. 
Now \eqref{e:H} takes the form
$$
 H_p(x) = \tilde f(p,q) - d_\phi(x,q) = \tilde f(p,q) - t
$$
and the lemma follows.
\end{proof}

Since for a fixed $p$ the geodesics $\gamma_{pq}$
cover $D\setminus B_\ep(p)$,
the identity \eqref{e:Hp} uniquely determines 
the function $H_p$ in $D\setminus B_\ep(p)$.
Moreover, the map $(q,t)\mapsto\gamma_{pq}(-t)$
is a diffeomorphism from an appropriate subset
of $S\times\R_+$ to $D\setminus B_\ep(p)$.
Hence \eqref{e:Hp} implies that $H_p$
is smooth on $D\setminus B_\ep(p)$.

Now let us show that $H_p|_{D\setminus B_\ep(p)}$
is a distance-like function.
First observe that $H_p$  is forward 1-Lipschitz.
Indeed, $H_p$ is the supremum of functions
of the form $x\mapsto \tilde f(p,y)-d_\phi(x,y)$,
each of which is forward 1-Lipschitz
by the triangle inequality for $d_\phi$.
Next, \eqref{e:Hp} implies that $H_p$
grows at unit rate along each geodesic $\gamma_{pq}$
within $D\setminus B_\ep(p)$. 
Hence $H_p|_{D\setminus B_\ep(p)}$ is distance-like
and
\be
\label{e:gradHp}
 \grad_\phi H_p(\gamma_{pq}(-t))=\dot\gamma_{pq}(-t)
\ee
whenever $\gamma_{pq}(-t)\in D\setminus B_\ep(p)$.

\begin{lemma}
\label{l:collar}
There exist a neighborhood $V\subset D$ of $S$
and a function $G\co S\times V\to\R$ such that

1. $G$ is an enveloping function for $\phi|_V^{}$.

2. $G$ coincides with $\tilde f$ on $S\times S$.

3. $G$ coincides with $F$ in a neighbourhood
of $\Delta$ in $S\times V$.
(Recall that $\Delta$ is the diagonal of $S\times S$.)

4. For every two different points $p,q\in S$,
the gradient of the function $G_p=G(p,\cdot)$ at $q$
points outwards $D$.

Furthermore, $G$ is constructed out of $\phi$ and $\tilde f$
in such a way that $G$ converges to $F|_{S\times V}$ (in $C^\infty$)
as $\tilde f$ goes to $f$.
\end{lemma}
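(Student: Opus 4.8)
The idea is to build $G$ by interpolating, inside a thin collar $V$ of $S=\pd D$, between the unperturbed enveloping function $F(p,x)=d_\phi(p,x)$ and a function that carries the perturbed boundary values $\tilde f$. The only place where $G$ must differ from $F$ is where $\tilde f$ differs from $f$, i.e.\ on $S\times S\setminus U_0$, and there we already have, for each $p$, the distance-like function $H_p$ constructed above on $D\setminus B_\ep(p)$, with $H_p|_{S}=\tilde f(p,\cdot)$ on the relevant part of the boundary and with outward-pointing gradient at boundary points $q\ne p$ (this last fact is \eqref{e:gradHp} together with the transversality of the geodesics $\gamma_{pq}$ to $\pd D$). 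So on the part of the collar lying outside $B_\ep(p)$ the function $H_p$ is already a legitimate candidate for $G_p$; near the diagonal, where $\tilde f=f$, we want $G_p=F_p=d_\phi(p,\cdot)$. The construction is local in $p$ only through the distance from the boundary point $q$ in question, so the two regimes $d_\phi(p,q)<5\ep$ and $d_\phi(p,q)$ not too small overlap on an annular region where both $F_p$ and $H_p$ are defined and, for $\tilde f$ close to $f$, nearly equal.

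\textbf{Step 1 (patching the two definitions).} Fix a smooth cutoff $\chi\co S\times S\to[0,1]$ depending only on $d_\phi(p,q)$, equal to $0$ for $d_\phi(p,q)<2\ep$ and equal to $1$ for $d_\phi(p,q)>3\ep$, and extend it to a function on $S\times D$ (again through $d_\phi(p,\cdot)$ or a smoothed version of it) supported away from $B_\ep(p)$. Set, on an appropriate collar $V=U_\delta(S)$ with $\delta\ll\ep$,
\[
 G_p(x)=(1-\chi(p,x))\,d_\phi(p,x)+\chi(p,x)\,H_p(x).
\]
Since $H_p$ is only defined and distance-like on $D\setminus B_\ep(p)$, the support condition on $\chi$ makes this well defined, and on $\{d_\phi(p,q)<2\ep\}$ we have $G_p=F_p$ outright, giving properties 2 (using $\tilde f=f$ there) and 3; on $\{d_\phi(p,q)>3\ep\}$ we have $G_p=H_p$, giving properties 2 and 4 there by Lemma \ref{l:Hp} and \eqref{e:gradHp}. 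Property 1 on these two "pure" regions is immediate. The whole recipe depends continuously on $\tilde f$, and collapses to $F$ when $\tilde f=f$ (because then $H_p\equiv d_\phi(p,\cdot)$), which is the final "furthermore" clause.

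\textbf{Step 2 (the transition annulus — the main obstacle).} The real work is on the overlap $2\ep\le d_\phi(p,q)\le 3\ep$, where $G_p$ is a genuine convex combination of $F_p$ and $H_p$. Here one must check that $G_p$ is still distance-like, i.e.\ $\phi^*(d_xG_p)=1$, and that $p\mapsto d_xG_p$ is still a diffeomorphism onto $UT^*_xD$ (property (ii) of Definition \ref{d-env}), and that its gradient at boundary points still points outward (property 4). None of this is automatic: a convex combination of two unit covectors need not be a unit covector. The point is that when $\tilde f$ is $C^\infty$-close to $f$, the function $H_p$ is $C^\infty$-close to $d_\phi(p,\cdot)$ on the compact annular region $\{2\ep\le d_\phi(x,p)\le 3\ep\}\cap V$ (this is exactly the content of the discussion preceding Lemma \ref{l:Hp}: $\tilde w_{pq}\to w_{pq}$, hence $\gamma_{pq}\to[pq]$, hence the solution of \eqref{e:Hp} converges to $d_\phi(p,\cdot)$ in $C^\infty$). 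Therefore $d_xG_p$ is a $C^\infty$-small perturbation of $d_xF_p=\L(w_{xp})$ — wait, more precisely of $d_xd_\phi(p,\cdot)$ — uniformly over the (compact) relevant range of $x$ and $p$. Now $F$ is an enveloping function, so $x\mapsto(p\mapsto d_xF_p)$ is a family of diffeomorphisms $S\to UT^*_xD$ varying smoothly over a compact set; a sufficiently small $C^\infty$-perturbation of such a family is again a family of embeddings onto quadratically convex spheres, i.e.\ an enveloping function of a nearby metric — this is precisely the argument of Lemma \ref{l:env-perturbation}, which I would invoke essentially verbatim. Since $\tilde\phi$ is constrained to equal $\phi$ near $\pd D$ in the final theorems, one must also arrange that the Minkowski norm recovered from $d_xG_p$ on this annulus in fact equals $\phi$; this is where choosing $\delta$ small relative to $\ep$ and making $G$ agree with $F$ on an inner sub-collar (nesting the cutoff so that $G_p=F_p$ for $d_\phi(x,S)<\delta/2$, say) does the job. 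Finally, property 4 in the annulus follows because the outward-pointing condition is open and holds for $F_p$ (transversality of $[pq]$ to $\pd D$) and for $H_p$ (by \eqref{e:gradHp}), hence for any sufficiently small perturbation interpolating between them.

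The one genuine subtlety to be careful about is that $\chi$ must depend on $x$ only through a quantity like $d_\phi(p,x)$ so that the level structure is compatible with the geodesics $\gamma_{pq}$; if instead one cut off by $d_\phi(x,S)$ one would not control the gradient direction. With the cutoff chosen as above, all four properties plus the convergence statement follow from the two "pure" regions plus a single application of the perturbation principle of Lemma \ref{l:env-perturbation} on the compact transition annulus.
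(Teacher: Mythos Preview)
Your construction is close to the paper's, but Step 2 contains a genuine gap. The convex combination $(1-\chi)\,d_\phi(p,\cdot) + \chi\,H_p$ is \emph{not} distance-like with respect to $\phi$ on the transition annulus: if $d_xH_p \neq d_xF_p$ are distinct unit covectors, strict convexity of $UT^*_xD$ forces their convex combination (and the $d\chi$ cross-terms, which you do not address) strictly inside the unit cosphere. Invoking Lemma \ref{l:env-perturbation} does not help here: that lemma only says the perturbed family is an enveloping function for some \emph{nearby} metric, whereas property~1 demands that $G$ be enveloping for $\phi|_V$ exactly (this is precisely what, in the proof of Theorem \ref{t:main}, forces $\tilde\phi=\phi$ near $\pd D$). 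Your proposed fix --- nesting a further cutoff in the distance-to-boundary direction so that $G_p=F_p$ on an inner sub-collar --- is incompatible with property~2: it would force $G|_{S\times S}=f$, not $\tilde f$.

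The observation you are missing is that no genuine interpolation is needed. Choose $\delta$ using strict convexity of $\pd D$ so that any $x\in U_\delta(S)$ with $\ep\le d_\phi(p,x)\le 2\ep$ lies on a geodesic $[pq]$ with $d_\phi(p,q)<3\ep$. Since $3\ep<5\ep$ we have $\tilde f(p,q)=f(p,q)$, hence $\tilde w_{pq}=w_{pq}$, hence $\gamma_{pq}=[pq]\ni x$, and Lemma \ref{l:Hp} gives $H_p(x)=\tilde f(p,q)-d_\phi(x,q)=d_\phi(p,q)-d_\phi(x,q)=d_\phi(p,x)$ \emph{exactly}. Thus the piecewise definition
\[
G_p(x)=\begin{cases} d_\phi(p,x), & d_\phi(p,x)<2\ep,\\ H_p(x), & d_\phi(p,x)>\ep,\end{cases}
\]
is consistent on the overlap and $G_p$ is distance-like for $\phi$ everywhere on $V$ because each piece is. With this in hand your cutoff $\chi$ becomes harmless (the combination is trivial on the overlap), but the perturbation argument in Step 2 should be replaced by this exact-matching argument; the only place a perturbation-style argument is still needed is for condition (ii) of Definition \ref{d-env}, and there the paper's version (uniform bounds on $dF$ away from $\Delta_{2\ep}$, where $G=F$) suffices.
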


\begin{proof}
Since the boundary is strictly convex with respect to $\phi$, 
there exists 
$\delta=\delta(\phi,\ep)>0$
such that the following holds. If $p,q\in S$, 
$x\in [pq]\cap U_{\delta}(S)$
and $\ep\le d_\phi(p,x)\le 2\ep$,
then $d_\phi(p,q)<3\ep$.
Let $V=U_\delta(S)$.

For each $p\in S$ define a function $G_p\co V\to\R$ by
\begin{equation}
\label{e:Gp}
 G_p(x) = 
 \begin{cases}
   H_p(x) &\quad\text{if $d_\phi(p,x)>\ep$}, \\
   d_\phi(p,x) &\quad\text{if $d_\phi(p,x)<2\ep$} .
 \end{cases}
\end{equation}
The domains in the two cases in \eqref{e:Gp}
overlap, yet the two formulas yield the same value.
Indeed, let $x\in V$ be such that $\ep<d_\phi(p,x)<2\ep$
and let $q\in S$ be such that $x\in[pq]$.
Then $d_\phi(p,q)<3\ep$ by the choice of $\delta$.
Then $\tilde w_{pq}=w_{pq}$ and 
hence $\gamma_{pq}=[pq]\ni x$. 
By Lemma \ref{l:Hp}
it follows that
$$
 H_p(x) = \tilde f(p,q)-d_\phi(x,q) = 
 d_\phi(p,q)-d_\phi(x,q) = d_\phi(p,x) .
$$
Thus the two formulas in \eqref{e:Gp}
agree on the overlap.
Hence \eqref{e:Gp} defines a continuous function $G_p\co V\to\R$.

Define $G\co S\times V\to\R$ by $G(p,x)=G_p(x)$.
Clearly $G$ is smooth outside $\Delta$
and $G$ tends to $F|_{S\times V}$ as $\tilde f$ goes to~$f$.
We are going to show that $G$ satisfies the assertions
of the lemma.
The third assertion is trivial by construction.

To prove the second assertion, consider $p,q\in S$.
If $d_\phi(p,q)<2\ep$ then $G(p,q)=d_\phi(p,q)=\tilde f(p,q)$
by \eqref{e:Gp} and the choice of~$\ep$.
In the other case, namely if $d_\phi(p,q)\ge 2\ep$,
we have $G(p,q)=H_p(q)=\tilde f(p,q)$
by setting $t=0$ in Lemma \ref{l:Hp}.

The fourth assertion follows from the fact
that $\grad_\phi G_p(q)=\tilde w_{pq}$
for all distinct $p,q\in S$.
In the case $d_\phi(p,q)>\ep$ this fact
follows from \eqref{e:gradHp},
and in the case $d_\phi(p,q)\le\ep$
we have $\grad_\phi G_p=w_{pq}=\tilde w_{pq}$. 

It remains to verify the first assertion of the lemma.
First observe that $G_p$ is distance-like because both $H_p$
and $d_\phi(p,\cdot)$ are distance-like on~$V$.
Let $x\in V\setminus S$. 
Since $G_p$ is distance-like,
we have a smooth map
$p\mapsto d_xG_p$ from $S$ to $UT^*_xD$.
It remains to prove that this map is a diffeomorphism.
Recall that $G$ is a $C^\infty$-small perturbation 
of $F|_{S\times V}$ and 
the map $p\mapsto d_xF_p$ is a diffeomorphism
from $S$ to $UT^*_xD$.
Furthermore $F$ and $G$ coincide
within the set $\De_{2\ep}=\{(p,x):|px|<2\ep\}$.
Hence $d_xG_p=d_xF_p$ for $(p,x)\in\De_{2\ep}$.
Outside $\De_{2\ep}$
the derivatives of $F$ are uniformly bounded.
Hence a sufficiently small perturbation
supported outside $\De_{2\ep}$
yields a diffeomorphism
$p\mapsto d_xG_p$ from $S$ to $UT^*_xD$.
\end{proof}

\begin{proof}[Proof of Theorem \ref{t:main}]
Let $G\co S\times V\to\R$ be a function
constructed in Lemma \ref{l:collar}.
We glue the desired function $\tilde F$ out of $F$ and $G$
using a partition of unity.
Let $h\co D\to[0,1]$ be a smooth function
such that $h=1$ outside $V$ and $h=0$ in a neighborhood 
$V_0$ of $S$.
Define $\tilde F\co S\times D\to\R$ by
$$
 \tilde F(p,x) = h(x) F(p,x) + (1-h(x)) G(p,x) .
$$
Then $\tilde F$ is an enveloping function
for a Finsler metric $\tilde\phi$ which
coincides with $\phi$ within $V_0$.
Indeed, $\tilde F=G$ in $V_0\subset V$
and hence Lemma \ref{l:collar} implies that $\tilde F$
is an enveloping function for $\phi$ within $V_0$.
Outside $V_0$, $\tilde F$ converges to $F$ (in $C^\infty)$
as $\tilde f\to f$. Hence by Lemma \ref{l:env-perturbation}
it is an enveloping function of some Finsler metric
$\tilde\phi$ which is close to $\phi$.

Since $\tilde F$ is an extension of $\tilde f$,
Lemma \ref{l:env-is-distance} implies that
$\tilde f$ is the boundary distance function of $\tilde\phi$.
This finishes the proof of Theorem \ref{t:main}
in the non-reversible case.

In order to make $\tilde\phi$ reversible
(provided that $\phi$ is reversible
and $\tilde f$ is symmetric)
we need some preparations.

\begin{notation}
\label{n:Gamma}
For a set $U\subset S\times S$ denote by $\Gamma(U)$
the set of velocity vectors of all geodesics $[pq]$ such
that $(p,q)\in U$.
\end{notation}

\begin{remark}
\label{r:locality}
The construction of $\tilde\phi$
also guarantees the following
property which we need below.
If $U\subset S\times S$ is an open set
such that $\tilde f|_U^{}=f|_U^{}$,
then $\tilde\phi$ coincides with $\phi$ on $\Gamma(U)$.

Indeed, let $(p,q)\in U$ and $x\in[pq]$.
The argument in Step~2 of the proof
of Lemma \ref{l:collar} shows that every local maximum
in \eqref{e:H} is global. Hence $H_p(x)=F(p,x)$.
Going through the subsequent parts of the construction
one sees that $\tilde F(p,x)=F(p,x)$.
Since $U$ is open, this argument also shows
that $\tilde F(p',x)=F(p',x)$ for all $p'$
from a neighborhood of $p$ in $S$. 
Therefore $\tilde\phi^*$ coincides with $\phi^*$
in a neighborhood of the co-vector $d_xF_p$.
Let $v$ be the velocity vector of $[pq]$ at $x$.
Then $d_xF_p$ is the Legendre transform of~$v$.
Hence $\tilde\phi(v)=\phi(v)$ as claimed.
\end{remark}

%\begin{proof}[Proof of Theorem \ref{t:symmetric}]
Now we are in position to prove the last
assertion of Theorem \ref{t:main}.
Let $R$ denote the involution of $S\times S$
permuting its arguments, i.e., $R(p,q)=(q,p)$.
Recall that $\tilde f$ and $f$ coincide in
a neighborhood $U_0$ of the diagonal $\Delta\subset S\times S$.
Fix an open covering $\{U_i\}_{i=1}^N$ 
of $S\times S\setminus U_0$
such that for every $i$ 
the sets $U_i$ and $R(U_i)$ 
have disjoint closures.

Connect $f$ to $\tilde f$ by a sequence
of functions $f=f_0,f_1,\dots,f_N=\tilde f$
where every $f_i\co S\times S\to\R$
is a symmetric function obtained from $f_{i-1}$
by a $C^\infty$-small perturbation supported
in $U_i\cup R(U_i)$.
By induction, we construct a sequence of
reversible metrics $\phi_i$ on $D$ such that
$f_i$ is the boundary distance function of $\phi_i$
for each $i$.
The induction step goes as follows.
First apply Theorem \ref{t:main}
to $\phi_{i-1}$ in place of $\phi$ and $f_i$ in place of $\tilde f$,
resp. This yields a non-reversible metric $\tilde\phi_i$ whose
boundary distance function is $f_i$.
By Remark \ref{r:locality}, $\tilde\phi_i$ coincides with $\phi_{i-i}$
outside $\Gamma(U_i\cup R(U_i))$ where $\Gamma(\dots)$ is 
defined as in Notation \ref{n:Gamma} however with respect 
to $\phi_{i-1}$ in place of $\phi$.
Since $\Gamma(U_i)$ and $\Gamma(R(U_i))$ have disjoint closures,
we can change the metric within $\Gamma(R(U_i))$ 
so that it becomes reversible. 
Namely we define the desired metric $\phi_i$ by
$$
 \phi_i(v) =
 \begin{cases}
   \tilde\phi_i(-v) &\quad \text{if } v\in \Gamma(R(U_i)) , \\
   \tilde\phi_i(v) &\quad \text{otherwise},
 \end{cases}
$$
for every $v\in UTD$.
This metric has the same boundary distance function.
Indeed, if $(p,q)\notin R(U_i)$ then the geodesic $[pq]$
of the metric $\tilde\phi_i$ remains a geodesic as we
replace $\tilde\phi_i$ by $\phi_i$.
Hence $d_{\phi_i}(p,q)=d_{\tilde\phi_i}(p,q)=f_i(p,q)$
if $(p,q)\notin R(U_i)$.
This identity holds for $(p,q)\in R(U_i)$ 
because in this case $(q,p)\notin R(U_i)$
and the distance is symmetric.

It remains to verify that the set of functions $\tilde f$
reachable by sequences $f_0,\dots,f_N$ such that the
construction works, contains a neighborhood of $f$ 
in our space of boundary distance functions.
(Recall that this is the space functions
on $S\times S$ coinciding with $f$ in $U_0$,
equipped with the $C^\infty$ topology.)
The covering $\{U_i\}$ and hence the number of steps
in the construction it determined by $U_0$.
The construction 
which produces $\tilde\phi$ out of $\phi$ and $\tilde f$
is explicit and hence continuous in its
arguments $\phi$ and $\tilde f$ (regarded
as elements of respective functional spaces).
Furthermore it is defined for an open set
of pairs $(\phi,\tilde f)$.
The decomposition and symmetrization procedures above
also enjoy similar continuity properties.
Therefore if $\tilde f$ is chosen from a suitable
neighborhood of $bd_\phi$ then each pair $(\phi_{i-1},f_i)$
in the inductive construction belongs to the domain
where Theorem~1 applies.
\end{proof}

\section{Proof of Theorem \ref{t:lens}}

As in the previous sections,
let $\phi$ be a simple metric on $D=D^n$
and $S=\pd D$. 
Recall that in Theorem \ref{t:lens}
we assume that $n\ge 3$.
The metric $\phi$ (or its restriction to
a neighborhood of $S$) determines
the sets $U^*_{in},U^*_{out}\subset T^*D$
as in Section \ref{sec:intro}.

For the proof of the theorem we need a zoo of notations. 
The good news is that they have clear geometric meaning.

We think of trajectories of a Hamiltonian flow
on the unit co-tangent bundle $UT^*D$. 
They enter $UT^*D$ and leave it.
Thus we have an entry co-vector in $U^*_{in}$
and an exit co-vector in $U^*_{out}$
for each trajectory. 
Of course each of them
can be viewed as a pair consisting
of the base point in $\pd D$
and the co-vector component.

A diffeomorphism $\sigma\co U^*_{in}\to U^*_{out}$
tells us the exit co-vector from
the entry one.
In the following notations and constructions
we use only certain properties of $\sigma$
and do not assume that it arose from a flow.
We apply these constructions to both
$\sigma$ and $\tilde\sigma$.

First we need a map
$P_\sigma$ which takes an entry co-vector 
and produces two points $p,q\in\pd D$ 
which are the entry and exit base points of the 
``trajectory'' of $\sigma$ determined by that co-vector.
Analogously, $Q_\sigma$ takes an exit co-vector and produces 
the entry and exit base points.

Formally,
we define maps $P_\sigma\co U^*_{in}\to S\times S$
and $Q_\sigma\co U^*_{out}\to S\times S$ by
$$
 P_\sigma(\alpha) = (\pi(\alpha),\pi(\sigma(\alpha))),
 \qquad\alpha\in U^*_{in},
$$
and
$$
 Q_\sigma(\alpha) = (\pi(\sigma^{-1}(\alpha)),\pi(\alpha)),
 \qquad\alpha\in U^*_{out},
$$
where $\pi\co T^*D\to D$ is the bundle projection.
Note that $P_\sigma=Q_\sigma\circ\sigma$.

Next we need the inverse maps $P_\sigma^{-1}$ and $Q_\sigma^{-1}$.
The map  $P_\sigma^{-1}$ takes two distinct points from 
the boundary $\pd D$ as its input and tells
us the entry co-vector.
Similarly, $Q_\sigma^{-1}$ takes two distinct boundary
points and reports the exit co-vector. 
In order to ensure that $P_\sigma^{-1}$ and $Q_\sigma^{-1}$
exist we assume that $\sigma$ is nice in the following sense:

\begin{definition}
\label{d:nice}
We say that a map $\sigma\co U^*_{in}\to U^*_{out}$ 
is \textit{nice} if the following conditions
are satisfied:
\begin{enumerate}
\item 
$\sigma$ is a symplectic diffeomorphism between $U^*_{in}$
and $U^*_{out}$.
\item
$P_\sigma$ is a diffeomorphism between $U^*_{in}$
and $S\times S\setminus\Delta$.
\item
$Q_\sigma$ is a diffeomorphism between $U^*_{out}$
and $S\times S\setminus\Delta$.
\end{enumerate}
Here, as in the previous sections, $\Delta$
denotes the diagonal of $S\times S$.
\end{definition}

In fact, the third condition 
in Definition \ref{d:nice}
follows from the second one
and the identity $P_\sigma=Q_\sigma\circ\sigma$.
We include them both in the definition
for the clarity of exposition.
The assumption that $\sigma$ is symplectic
is crucial. Nonetheless it is used only in 
one place, namely in the proof of Lemma \ref{l:lambdaclosed}.

If $\sigma$ is the dual lens map of a simple metric then
$\sigma$ is nice.
Furthermore if $\tilde\sigma$ is 
obtained from a nice map $\sigma$ by
a sufficiently
small compactly supported perturbation 
(as in Theorem \ref{t:lens}),
then $\tilde\sigma$ is nice as well.

The last bit of notation is 1-form $\lambda_\sigma$ on 
$S\times S\setminus\Delta$ defined as follows.
For $p,q\in S$, $p\ne q$, $\xi\in T_pS$, $\eta\in T_qS$,
define
\be
\label{e:lambdasigma}
 \lambda_\sigma(\xi,\eta) 
 = - P_\sigma^{-1}(p,q)(\xi) + Q_\sigma^{-1}(p,q)(\eta) .
\ee
Here $(\xi,\eta)$ is regarded as an element of 
$T_{(p,q)}(S\times S\setminus\Delta)$ through
the standard identification
$T(S\times S) \simeq TS\times TS$.
The maps $P_\sigma^{-1}$ and $Q_\sigma^{-1}$
in \eqref{e:lambdasigma}
are correctly defined since $\sigma$ is nice.
Moreover $P_\sigma^{-1}(p,q)\in (U^*_{in})^{}_p$
and $Q_\sigma^{-1}(p,q)\in (U^*_{out})^{}_q$
where $(U^*_{in})^{}_p$ and $(U^*_{out})^{}_q$
are the fibers of $U^*_{in}$ and $U^*_{out}$
over $p$ and $q$, resp. Therefore the terms
$P_\sigma^{-1}(p,q)(\xi)$ and $Q_\sigma^{-1}(p,q)(\eta)$
in \eqref{e:lambdasigma} make sense.

The following lemma explains where the definition 
of $\lambda_\sigma$ comes from. 
The lemma tells us that if $\sigma$ arises from a simple metric 
then $\lambda_\sigma$ is the derivative of 
the distance function, as follows:

\begin{lemma}
\label{l:lambda=df}
Let $f=bd_\phi$ and let $\sigma\co U^*_{in}\to U^*_{out}$
be a nice map. Then
$\sigma$ is the dual lens map of $\phi$
if and only if $df=\lambda_\sigma$
on $S\times S\setminus\Delta$.
\end{lemma}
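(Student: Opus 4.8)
The plan is to prove both implications at once by directly unwinding the definitions of $P_\sigma^{-1}$, $Q_\sigma^{-1}$ and $df$, using the first variation formulas \eqref{e:firstvar} and \eqref{e:firstvar1}. First I would recall the relationship established in Section \ref{sec:prelim}: for a simple metric $\phi$ with boundary distance $f=bd_\phi$, the geodesic $[pq]$ joining distinct $p,q\in S$ has velocity $v_{pq}\in UT_pD$ at $p$ and $w_{pq}\in UT_qD$ at $q$, and by \eqref{e:firstvar1}, \eqref{e:firstvar} one has $d_pf(p,\cdot\,q)\text{-part} = -\L(v_{pq})|_{T_pS}$ and $d_qf(p,\cdot)|_{T_qS} = \L(w_{pq})|_{T_qS}$; thus the total differential of $f$ at $(p,q)$, evaluated on $(\xi,\eta)\in T_pS\times T_qS$, equals $-\L(v_{pq})(\xi) + \L(w_{pq})(\eta)$. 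On the other hand, by definition of the dual lens map $\sigma_\phi=\L\circ\beta\circ\L^{-1}$, the entry co-vector of the geodesic $[pq]$ is exactly $\L(v_{pq})$ and its exit co-vector is $\L(w_{pq})$; so if $\sigma$ \emph{is} the dual lens map of $\phi$, then $P_\sigma^{-1}(p,q)=\L(v_{pq})$ and $Q_\sigma^{-1}(p,q)=\L(w_{pq})$, and comparing with \eqref{e:lambdasigma} gives $\lambda_\sigma(\xi,\eta) = -\L(v_{pq})(\xi)+\L(w_{pq})(\eta) = d_{(p,q)}f(\xi,\eta)$, i.e.\ $df=\lambda_\sigma$.

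For the converse, suppose $\sigma$ is a nice map with $df=\lambda_\sigma$ on $S\times S\setminus\Delta$, and let $\sigma_\phi$ denote the (genuine) dual lens map of $\phi$, which is also nice. The point is that $\lambda_\sigma$ determines $\sigma$: knowing $\lambda_\sigma$ as a 1-form on $S\times S\setminus\Delta$ recovers, for each fixed $(p,q)$, the covector $P_\sigma^{-1}(p,q)$ as (minus) the restriction of $\lambda_\sigma(p,q)$ to the first factor $T_pS$, and $Q_\sigma^{-1}(p,q)$ as its restriction to the second factor $T_qS$ — but this only pins down the \emph{tangential} parts $P_\sigma^{-1}(p,q)|_{T_pS}$ and $Q_\sigma^{-1}(p,q)|_{T_qS}$. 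To recover the full covectors one uses that $P_\sigma^{-1}(p,q)\in(U^*_{in})_p$ and $Q_\sigma^{-1}(p,q)\in(U^*_{out})_q$: on the unit co-sphere $UT^*_pD$, a covector pointing strictly inward (resp.\ outward) is uniquely determined by its restriction to $T_pS$, since the boundary is strictly convex and the inward-pointing half of the unit co-sphere projects diffeomorphically onto the open unit co-disc of $T_p^*S$. Hence $df=\lambda_\sigma=\lambda_{\sigma_\phi}$ forces $P_\sigma^{-1}=P_{\sigma_\phi}^{-1}$ and $Q_\sigma^{-1}=Q_{\sigma_\phi}^{-1}$ as maps $S\times S\setminus\Delta\to U^*_{in},U^*_{out}$, hence $P_\sigma=P_{\sigma_\phi}$ and $Q_\sigma=Q_{\sigma_\phi}$, and therefore $\sigma = Q_\sigma^{-1}\circ P_\sigma = Q_{\sigma_\phi}^{-1}\circ P_{\sigma_\phi} = \sigma_\phi$, using $P_\sigma=Q_\sigma\circ\sigma$.

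The step I expect to be the main (though modest) obstacle is the reconstruction of a full unit co-vector from its tangential restriction — i.e.\ verifying that strict convexity of $\pd D$ makes the projection from the inward (resp.\ outward) part of $UT^*_xD$ to the open unit co-ball of $T^*_xS$ a bijection, so that the identity $df=\lambda_\sigma$ (which a priori constrains only tangential components) genuinely determines $\sigma$. I would isolate this as a short sub-lemma: for $x\in S$ and a covector $\alpha\in T^*_xD$ with $\phi^*(\alpha)=1$, the sign of $\alpha$ on the inward normal direction is fixed by requiring $\L^{-1}(\alpha)$ to point inward, and among such covectors the restriction $\alpha|_{T_xS}$ determines $\alpha$ because $UT^*_xD$ is quadratically convex and $T_xS$ is a hyperplane transverse to it at the relevant points. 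Everything else is bookkeeping with \eqref{e:firstvar}, \eqref{e:firstvar1} and the definitions \eqref{e:lambdasigma}, $P_\sigma=Q_\sigma\circ\sigma$; note in particular that the symplectic hypothesis on $\sigma$ is \emph{not} needed here (it enters only later, in Lemma \ref{l:lambdaclosed}, to guarantee $\lambda_\sigma$ is closed so that an $f$ with $df=\lambda_\sigma$ exists at all).
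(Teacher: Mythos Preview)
Your proposal is correct and follows essentially the same approach as the paper: the ``only if'' direction via the first variation formulas \eqref{e:firstvar}, \eqref{e:firstvar1} is identical, and the ``if'' direction rests on the same key observation that a covector in $(U^*_{in})_p$ (resp.\ $(U^*_{out})_q$) is determined by its restriction to $T_pS$ (resp.\ $T_qS$). Your concluding step $\sigma=Q_\sigma^{-1}\circ P_\sigma$ is in fact slightly more direct than the paper's injectivity argument via $\pi\circ\sigma^{-1}|_{(U^*_{out})_q}$; one small remark is that the relevant convexity in your sub-lemma is the quadratic convexity of $UT^*_xD$ (which you invoke correctly), not the strict convexity of $\partial D$.
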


\begin{proof}
We begin with the ``only if'' part.
Let $\sigma$ be the dual lens map of $\phi$.
Let $p,q\in S$, $p\ne q$,
$\alpha=\L(v_{pq})$ and $\beta=\L(w_{pq})$
where $v_{pq}$ and $w_{pq}$ are the velocity vectors
of the geodesic $[pq]$ at $p$ and $q$,
see notation in Section \ref{sec:prelim}.
Then $\beta=\sigma(\alpha)$ and
$P_\sigma(\alpha)=Q_\sigma(\beta)=(p,q)$.
By \eqref{e:firstvar1} and \eqref{e:firstvar} we have
$d_p f(\cdot,q) = -\alpha$
and 
$d_q f(p,\cdot) = \beta$.
Hence for every $\xi\in T_p S$ and $\eta\in T_q S$,
$$
 df(\xi,\eta) = -\alpha(\xi) + \beta(\eta)
 = - P_\sigma^{-1}(p,q)(\xi) + Q_\sigma^{-1}(p,q)(\eta)
 = \lambda_\sigma(\xi,\eta) .
$$
This proves the ``only if'' part of the lemma.

To prove the ``if'' part, let $\widehat\sigma$ be
the dual lens map of $\phi$.
By the ``only if'' part we have $df=\lambda_{\widehat\sigma}$
on $S\times S\setminus\Delta$.
Thus it suffices to prove that $\sigma=\widehat\sigma$
provided that $\lambda_\sigma=\lambda_{\widehat\sigma}$.

Let $\alpha\in U^*_{in}$, $p=\pi(\alpha)\in S$,
$\beta=\widehat\sigma(\alpha)\in U^*_{out}$
and $q=\pi(\beta)\in S\setminus\{p\}$.
Then $P_{\widehat\sigma}(\alpha)=Q_{\widehat\sigma}(\beta)=(p,q)$.
Since $\lambda_{\widehat\sigma}=\lambda_\sigma$,
substituting $\eta=0$ into \eqref{e:lambdasigma}
yields that
$$
 P_\sigma^{-1}(p,q)(\xi) = P_{\widehat\sigma}^{-1}(p,q)(\xi)
 =\alpha(\xi)
$$
for every $\xi\in T_pS$.
That is, $P_\sigma^{-1}(p,q)|_{T_pS}=\alpha|_{T_pS}$.
Since $P_\sigma^{-1}(p,q)\in (U^*_{in})^{}_p$
and a co-vector from $(U^*_{in})^{}_p$ is uniquely determined
by its restriction to $T_pS$, it follows that
$P_\sigma^{-1}(p,q)=\alpha$.
Hence $\sigma(\alpha)\in (U^*_{out})^{}_q$.

Similarly, substituting $\xi=0$ into \eqref{e:lambdasigma}
yields that $Q_\sigma^{-1}(p,q)|_{T_qS}=\beta|_{T_qS}$
and therefore $Q_\sigma^{-1}(p,q)=\beta$.
Hence $\sigma^{-1}(\beta)\in (U^*_{in})^{}_p$.
Thus
\be
\label{e:lambda=df1}
 \pi(\sigma^{-1}(\beta)) = p = \pi(\alpha)
 =\pi(\sigma^{-1}(\sigma(\alpha)))
\ee

Since $\sigma$ is nice, $Q_\sigma$ is a diffeomorphism
from $U^*_{out}$ to $S\times S\setminus\Delta$.
In particular, the restriction $Q_\sigma|_{(U^*_{out})^{}_q}$ 
is a diffeomorphism
from $(U^*_{out})^{}_q$ to $(S\setminus\{q\})\times\{q\}$.
Hence the first coordinate map of this restriction, 
namely
$\pi\circ\sigma^{-1}|_{(U^*_{out})^{}_q}$, is injective.
Since $\beta\in (U^*_{out})_q$ and $\sigma(\alpha)\in (U^*_{out})_q$,
this injectivity and \eqref{e:lambda=df1} imply that 
$\sigma(\alpha)=\beta$.
Since $\alpha$ is an arbitrary element of $U^*_{in}$
and $\beta=\widehat\sigma(\alpha)$,
it follows that $\sigma=\widehat\sigma$.
This finishes the proof of the ``if'' part of the lemma.
\end{proof}

\begin{lemma}
\label{l:lambdaclosed}
$\lambda_\sigma$ is a closed 1-form
for every nice map $\sigma\co U^*_{in}\to U^*_{out}$.
\end{lemma}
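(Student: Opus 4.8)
The plan is to pull $\lambda_\sigma$ back along the diffeomorphism $P_\sigma$ and recognize the result as a difference of tautological $1$-forms, and then differentiate: $d\lambda_\sigma$ will pull back to the difference of the two restricted canonical symplectic forms, which is zero precisely because $\sigma$ is symplectic.

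First I would fix notation. Let $\theta$ be the Liouville (tautological) $1$-form on $T^*D$, characterized by $\theta_\alpha(X)=\alpha(d\pi(X))$ for $\alpha\in T^*D$ and $X\in T_\alpha(T^*D)$, where $\pi\co T^*D\to D$ is the bundle projection, and let $\omega=d\theta$ be the canonical symplectic form. Write $\theta_{in},\theta_{out}$ for the restrictions of $\theta$ to the submanifolds $U^*_{in},U^*_{out}$, and $\omega_{in},\omega_{out}$ for the restrictions of $\omega$; these last are exactly the symplectic forms on $U^*_{in},U^*_{out}$ discussed in Section \ref{sec:intro}, and since restriction to a submanifold commutes with $d$ we have $d\theta_{in}=\omega_{in}$ and $d\theta_{out}=\omega_{out}$. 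Because $\sigma$ is nice, it is a symplectic diffeomorphism, so $\sigma^*\omega_{out}=\omega_{in}$, and $P_\sigma\co U^*_{in}\to S\times S\setminus\Delta$ is a diffeomorphism.

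Next I would establish the identity
\be
\label{e:lambda-pullback}
 P_\sigma^*\lambda_\sigma = -\theta_{in}+\sigma^*\theta_{out}
\ee
of $1$-forms on $U^*_{in}$. To check it, fix $\alpha\in U^*_{in}$ and set $p=\pi(\alpha)$, $q=\pi(\sigma(\alpha))$, so that $P_\sigma(\alpha)=(p,q)$; then $P_\sigma^{-1}(p,q)=\alpha$, while $Q_\sigma^{-1}(p,q)=\sigma(\alpha)$ since $Q_\sigma(\sigma(\alpha))=(\pi(\alpha),\pi(\sigma(\alpha)))=(p,q)$. For $X\in T_\alpha U^*_{in}$, under the identification $T(S\times S)\simeq TS\times TS$ one has $dP_\sigma(X)=\bigl(d\pi(X),\,d\pi(d\sigma(X))\bigr)$, so \eqref{e:lambdasigma} gives
\begin{align*}
 (P_\sigma^*\lambda_\sigma)_\alpha(X)
 &= -P_\sigma^{-1}(p,q)\bigl(d\pi(X)\bigr)+Q_\sigma^{-1}(p,q)\bigl(d\pi(d\sigma(X))\bigr) \\
 &= -\alpha\bigl(d\pi(X)\bigr)+\sigma(\alpha)\bigl(d\pi(d\sigma(X))\bigr) \\
 &= -(\theta_{in})_\alpha(X)+(\sigma^*\theta_{out})_\alpha(X),
\end{align*}
which is \eqref{e:lambda-pullback}. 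Applying $d$ and using that pullback commutes with $d$,
\[
 P_\sigma^*(d\lambda_\sigma) = -d\theta_{in}+\sigma^*(d\theta_{out}) = -\omega_{in}+\sigma^*\omega_{out} = 0
\]
by symplecticity of $\sigma$. Since $P_\sigma$ is a diffeomorphism, this forces $d\lambda_\sigma=0$, i.e.\ $\lambda_\sigma$ is closed.

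I do not expect a genuine obstacle here; the lemma is a formal computation, and the only care needed is bookkeeping — confirming that the two components of $dP_\sigma$ are $d\pi$ and $d(\pi\circ\sigma)=d\pi\circ d\sigma$ in the identification used in \eqref{e:lambdasigma}, and that restricting the Liouville form of $T^*D$ to $U^*_{in}$ (resp.\ $U^*_{out}$) reproduces exactly the terms $\alpha(d\pi(X))$ (resp.\ $\sigma(\alpha)(d\pi(d\sigma(X)))$) above, together with the elementary fact $d(\theta|_Z)=(d\theta)|_Z$ for a submanifold $Z$. As the remark after Definition \ref{d:nice} indicates, this argument is the single place where the symplectic hypothesis on $\sigma$ enters.
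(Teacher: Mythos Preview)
Your argument is correct and is essentially the paper's own proof viewed from the other side of the diffeomorphism $P_\sigma$: the paper writes $\lambda_\sigma = -(P_\sigma^{-1})^*\nu + (Q_\sigma^{-1})^*\nu$ on $S\times S\setminus\Delta$ (with $\nu$ the canonical $1$-form), differentiates, and uses $Q_\sigma^{-1}=\sigma\circ P_\sigma^{-1}$ together with $\sigma^*\omega=\omega$, which is exactly your identity \eqref{e:lambda-pullback} after applying $(P_\sigma^{-1})^*$.
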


\begin{proof}
Let $\nu$ denote the canonical 1-form on $T^*D$.
Recall that the canonical 1-form is defined as follows:
for $\alpha\in T^*D$ and $\tau\in T_\alpha T^*D$,
$$
 \nu(\tau) = \alpha (d\pi(\tau))
$$
where $\pi\co T^*D\to D$ is the bundle projection.
The two terms in \eqref{e:lambdasigma} are actually
pull-backs of $\nu$ by $P_\sigma^{-1}$ and $Q_\sigma^{-1}$, 
namely
$$
 \lambda_\sigma 
 = - (P_\sigma^{-1})^* \nu + (Q_\sigma^{-1})^* \nu .
$$
Hence
$$
 d\lambda_\sigma 
 = - (P_\sigma^{-1})^* (d\nu) + (Q_\sigma^{-1})^* (d\nu) .
$$
Let $\omega$ be the canonical symplectic 
form on $T^*D$.
Recall that $\omega=d\nu$.
Therefore
$$
\begin{aligned}
 d\lambda_\sigma 
 &= - (P_\sigma^{-1})^* \omega + (Q_\sigma^{-1})^* \omega 
 = - (P_\sigma^{-1})^* \omega + (\sigma\circ P_\sigma^{-1})^* \omega \\
 &=- (P_\sigma^{-1})^* \omega + (P_\sigma^{-1})^* (\sigma^*\omega) = 0
\end{aligned}
$$
Here we use the identity $P_\sigma=Q_\sigma\circ\sigma$
and the fact that $\sigma^*\omega=\omega$
because $\sigma$ is symplectic.
Thus $d\lambda_\sigma=0$ and the lemma follows.
\end{proof}

\begin{proof}[Proof of Theorem \ref{t:lens}]
Now let $\sigma$ be the dual lens map of $\phi$
and $\tilde\sigma$ be a perturbation of $\sigma$
as in Theorem \ref{t:lens}. Recall that $\tilde\sigma$
is nice provided that the perturbation is sufficiently small.
Then by Lemma \ref{l:lambdaclosed} the 1-form
$\lambda_{\tilde\sigma}$ is closed.
Since $n\ge 3$, the space $S\times S\setminus\Delta$
is simply connected and therefore $\lambda_{\tilde\sigma}$
is exact.
Hence there exists a smooth function
$\tilde f\co S\times S\setminus\Delta\to\R$
such that $d\tilde f=\lambda_{\tilde\sigma}$.

Since $\tilde\sigma$ coincide with $\sigma$
outside a compact set, 
$\lambda_{\tilde\sigma}$ and $\lambda_\sigma$
also coincide outside a compact subset of
$S\times S\setminus\Delta$. 
That is, there exists a tubular 
neighborhood $U$ of $\Delta$
in $S\times S$ such that 
$\lambda_{\tilde\sigma}|_{U\setminus\Delta}
=\lambda_\sigma|_{U\setminus\Delta}$.
This fact and Lemma \ref{l:lambda=df}
imply that 
\be\label{e:df=dtildef}
d\tilde f|_{U\setminus\Delta}
=df|_{U\setminus\Delta}
\ee
where $f$ is the boundary distance function of $\phi$.
Since $n\ge 3$, the set $U\setminus\Delta$ is connected.
This and \eqref{e:df=dtildef} imply that $\tilde f-f$
is constant on $U\setminus\Delta$.
Since $\tilde f$ is defined up to
an additive constant,
we may assume that $\tilde f=f$ on $U\setminus\Delta$.
Now we can extend $\tilde f$ to the whole $S\times S$
by setting it zero on $\Delta$.
Then $\tilde f$ is a small perturbation of $f$
in the sense of Theorem \ref{t:main}.
Hence by Theorem~\ref{t:main} there exists
a simple Finsler metric $\tilde\phi$ whose boundary
distance function is~$\tilde f$.
Moreover $\tilde\phi$ coincides with $\phi$
in a neighborhood of the boundary
and is close to $\phi$ in $C^\infty$.
Now Lemma \ref{l:lambda=df} applied to $\tilde\phi$
in place of $\phi$ implies that $\tilde\sigma$
is the dual lens map of $\sigma$.

It remains to prove the last assertion of Theorem \ref{t:lens},
namely that $\tilde\phi$ can be chosen reversible
if $\phi$ is reversible and 
$\tilde\sigma$ satisfies the symmetry condition:
$-\tilde\sigma(-\tilde\sigma(\alpha))=\alpha$
for all $\alpha\in U^*_{in}$.
By the last assertion of Theorem \ref{t:main}, 
it suffices to verify that $\tilde f$ is symmetric,
that is, $\tilde f(p,q)=\tilde f(q,p)$
for all $p,q\in S$. 
First observe that the symmetry condition
on $\tilde\sigma$ 
implies that
\be\label{e:Psymmetry}
  P_{\tilde\sigma}^{-1}(q,p)=-Q_{\tilde\sigma}^{-1}(p,q)
\ee
for all $p,q\in S$, $p\ne q$.
Indeed, if $\alpha=P_{\tilde\sigma}^{-1}(p,q)$
and
$\beta=Q_{\tilde\sigma}^{-1}(p,q)$
then $\tilde\sigma(\alpha)=\beta$,
hence $\tilde\sigma(-\beta)=-\alpha$ by the symmetry condition.
Therefore $P_{\tilde\sigma}^{-1}(q,p)=-\beta$
and \eqref{e:Psymmetry} follows.

Now consider the function $\tilde g\co S\times S\to\R$
defined by $\tilde g(x,y)=\tilde f(y,x)$.
Let $p,q\in S$, $p\ne q$, $\xi\in T_pS$ and $\eta\in T_qS$.
Then
$$
\begin{aligned}
d_{(p,q)}\tilde g(\xi,\eta) = d_{(q,p)}\tilde f(\eta,\xi)
&= -P_{\tilde\sigma}^{-1}(q,p)(\eta)+Q_{\tilde\sigma}^{-1}(q,p)(\xi) \\
&= Q_{\tilde\sigma}^{-1}(p,q)(\eta)-P_{\tilde\sigma}^{-1}(p,q)(\xi)
=d_{(p,q)}\tilde f(\xi,\eta) .
\end{aligned}
$$ 
Here we used the identity $d\tilde f=\lambda_{\tilde\sigma}$,
the definition of $\lambda_{\tilde\sigma}$ and
\eqref{e:Psymmetry}.
Thus $d\tilde g=d\tilde f$, hence $\tilde g=\tilde f$, that is,
$\tilde f$ is symmetric.
This finishes the proof of Theorem \ref{t:lens}.
\end{proof}

\begin{remark}
\label{r:n=2}
The above argument does not work in dimension $n=2$
for two reasons. 
The first issue is that $S\times S\setminus\Delta$
is not simply connected so the closed 1-form $\lambda_{\tilde\sigma}$
a priori may not be exact.
However it is easy to see that this 1-form is exact
even in dimension~2. Indeed, the fundamental group
of $S\times S\setminus\Delta$ is cyclic and its generator
can be realized by a loop arbitrarily close to $\Delta$.
Since $\lambda_{\tilde\sigma}$ and $\lambda_\sigma$
coincide in a neighborhood of $\Delta$, their integrals
over this loop are equal. Since $\lambda_\sigma=df$
is exact, these integrals are zero. Hence $\lambda_{\tilde\sigma}$
is exact as well.

The second issue with $n=2$ is that $U\setminus\Delta$
is not connected, where $U$ is a tubular neighborhood
of $\Delta$ is $S\times S$. In fact, $U\setminus\Delta$
consists of two components. These components are
represented by points $(p,q_1)$ and $(p,q_2)$ in 
$S\times S\setminus\Delta$ where $p\in S$ is an
arbitrary point and $q_1,q_2\in S$ are close to $p$
and lie on different sides of it.
It may happen that the function $\tilde f$
(an antiderivative of $\lambda_{\tilde\sigma}$)
approaches different values as we tend to the
diagonal $\Delta$ from different sides.
In this case $\tilde f$ cannot be extended to
the whole $S\times S$ as a continuous function.
To rule out this situation we need to impose
an additional condition on $\tilde\sigma$, namely
\be\label{e:integral}
 \int_{\{p\}\times (S\setminus\{p\})} \lambda_{\tilde\sigma} = 0 
\ee
for some (and then all) $p\in S$.
This condition is a necessary and sufficient one
and it is easy to give examples when it is not satisfied.

Taking into account the definition of $\lambda_{\tilde\sigma}$,
the condition \eqref{e:integral} can be interpreted as follows.
For a fixed $p\in S$, the image $\tilde\sigma(U^*_{in}|^{}_p)$
is a graph of a differential 1-form on $S\setminus\{p\}$,
and \eqref{e:integral} is equivalent to the requirement
that the (improper) integral of this 1-form over
$S\setminus\{p\}$ is zero.
\end{remark}

\section{Proof of Theorem \ref{t:entropy}}

In this section we deduce Theorem \ref{t:entropy}
from Theorem \ref{t:lens}.
Consider the unit disc $D^6\subset\R^6$ equipped with
the standard symplectic structure.
We need the following lemma.

\begin{lemma}
\label{l:D6}
There exists a symplectomorphism $\theta\co D^6\to D^6$
which is arbitrarily close to the identity in $C^\infty$,
coincides with the identity map near the boundary,
and has positive metric entropy.
\end{lemma}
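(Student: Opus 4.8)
My plan is to localize the construction and then separate variables. First I would take $\theta$ to be the identity outside a small ball $B\subset\operatorname{int}D^6$, and on $B$ use the splitting $\R^6=\R^2\times\R^4$ with the split symplectic form to fix a small polydisc $P=D^2\times D^4\subset B$ and set $\theta=\psi\times\operatorname{id}_{D^4}$ on $P$. Here $\psi\co D^2\to D^2$ is to be an \emph{area-preserving} diffeomorphism which equals the identity near $\pd D^2$, is arbitrarily $C^\infty$-close to the identity, and has positive metric entropy with respect to the area. Granting such a $\psi$, the map $\theta$ is a symplectomorphism of $D^6$ with the required support and smallness; moreover, if $\Lambda\subset D^2$ is a positive-area $\psi$-invariant set on which the normalized $\mathrm{area}|_\Lambda$ is a hyperbolic invariant measure of positive entropy --- produced in the last step --- then $\Lambda\times D^4$ is $\theta$-invariant of positive Liouville volume, the normalized restriction of the Liouville volume to it is $\theta$-invariant, and by the product formula for measure-theoretic entropy its entropy equals $h_{\mathrm{area}|_\Lambda}(\psi)+h(\operatorname{id}_{D^4})=h_{\mathrm{area}|_\Lambda}(\psi)>0$; since the entropy function is affine on invariant measures, this forces $h_{\mathrm{Liouville}}(\theta)>0$. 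So everything reduces to constructing $\psi$.

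For $\psi$ I would use a smoothed \emph{linked twist map}. Pick two closed round annuli $A_1,A_2\subset\operatorname{int}D^2$ with distinct centres, overlapping in two disjoint lens-shaped regions, and let $T_i\co D^2\to D^2$ be the map that, in polar coordinates centred at the centre of $A_i$, rotates each circle $\{r=\mathrm{const}\}$ lying in $A_i$ by an angle $g_i(r)$ and is the identity outside $A_i$; here $g_i$ is smooth, supported in the interior of the radial interval of $A_i$, with $\|g_i\|_{C^\infty}\le\ep$, and with $g_i'$ of one fixed sign and bounded away from $0$ on a fixed sub-annulus $A_i'\subset A_i$. Each $T_i$ is a smooth area-preserving diffeomorphism equal to the identity near $\pd D^2$ and $O(\ep)$-close to the identity in $C^\infty$, and so is $\psi:=T_2\circ T_1$; thus $\psi$ is as close to the identity in $C^\infty$ as we wish.

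The hard part, and the step I expect to be the main obstacle, is to prove that this $\psi$ has positive metric entropy for \emph{every} $\ep>0$. Let $\Lambda$ be the set of points whose entire $\psi$-orbit stays inside $A_1'\cup A_2'$, where both twists are honestly monotone. On $\Lambda$ one has the pair of transverse cone fields that are standard for linked twist maps; they are invariant under $D\psi$ and strictly contracted, resp.\ expanded, along any orbit segment that crosses both annuli (this is where $g_1'g_2'>0$ on $A_1'\cup A_2'$ is used), so every Oseledets-regular point of $\Lambda$ has a nonzero Lyapunov exponent of each sign. The Pesin--Katok--Strelcyn theory of smooth hyperbolic systems (in this regularized setting there are no singularities; compare the analyses of linked twist maps by Wojtkowski and by Przytycki) then yields that $\Lambda$ has positive area, that the normalized $\mathrm{area}|_\Lambda$ is a $\psi$-invariant hyperbolic measure, and, via Pesin's entropy formula, that its entropy is positive; hence $h(\psi)>0$ with respect to the area. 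The delicate point is exactly the positivity of $\mathrm{area}(\Lambda)$: smoothing the profiles $g_i$ makes $g_i'$ vanish near the bounding circles of $A_i$, where the hyperbolicity degenerates, so the annuli and the profiles must be arranged so that a positive-measure set of orbits spends only a controlled fraction of time in these transition zones --- this is the technical heart of the construction, and it is compatible with keeping $\|g_i\|_{C^\infty}$ as small as desired. Once $\psi$ is available, the assembly in the first paragraph completes the proof.
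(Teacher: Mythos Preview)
Your approach is genuinely different from the paper's. The paper does not reduce to a two-dimensional problem at all. Instead it takes a closed surface $M$ of genus $\ge 2$ with a negatively curved metric, sets $N=M\times(-2,2)$, and writes down a compactly supported Hamiltonian $H$ on $T^*N$ which, on $B^*M\times B^*(-1,1)$, equals $\tfrac12|\xi|^2$; its Hamiltonian flow there is the geodesic flow of $M$ times the trivial flow, hence has positive metric entropy (Anosov). Since $N$ is diffeomorphic to a thickened handlebody $U\subset\R^3$, one gets a symplectic embedding $T^*N\hookrightarrow T^*\R^3\simeq\R^6$, and after rescaling the support sits inside $D^6$. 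The map $\theta$ is then simply the time-$t$ map $\Phi^t$ of this flow for small $t>0$.

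The decisive advantage of the paper's route is that ``arbitrarily $C^\infty$-close to the identity'' comes \emph{for free}: one just lets $t\to 0$, and the entropy of $\Phi^t$ equals $t$ times the entropy of the flow, positive for every $t>0$. No further analysis is needed.

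Your plan pushes all the work into dimension two, where Hamiltonian \emph{flows} are integrable and carry no entropy; so you are forced to build a genuinely discrete $\psi$, and then the closeness-to-identity requirement becomes a real constraint rather than a triviality. You correctly locate the bottleneck: for a smoothed linked twist map with profiles of $C^\infty$-norm $\le\ep$, you must show that for \emph{every} $\ep>0$ the hyperbolic set $\Lambda$ has positive area. The cone-field mechanism does give nonzero exponents on orbits that cross both overlap regions, but the ergodicity/Bernoulli results for linked twist maps (Burton--Easton, Wojtkowski, Przytycki) are stated under lower bounds on the twist strength or on the total turning, and the positive-measure conclusion in the weak-twist, smoothed-profile regime on the plane is not something one can simply cite. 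So your outline is plausible but leaves open exactly the step you flag; the paper's Hamiltonian-flow construction sidesteps that difficulty entirely.
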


\begin{proof}
Let $M$ be a surface of genus $\ge 2$
equipped with a Riemannian metric of negative curvature.
Let $I_1=(-1,1)$, $I_2=(-2,2)$ and
$N=M\times I_2$.
Let $B^*M$ denote the bundle of unit balls
in $T^*M$.

First we construct a compactly supported
Hamiltonian $H\co T^*N\to\R$ whose Hamiltonian flow
has positive metric entropy.
Let $\psi\co\R\to\R_+$ be a smooth cut-off function
which equals 1 on $[-1,1]$ and vanishes
outside $(-\frac32,\frac32)$.
For a co-vector $(x,y,\xi,\eta)\in T^*N$
where $x\in M$, $y\in I_2$, $\xi\in T^*_xM$,
$\eta\in T^*_y I_2$, we define
$$
 H(x,y,\xi,\eta) = 
 \psi(y)\cdot\psi(|\xi|)\cdot\psi(|\eta|)
 \cdot \frac{|\xi|^2}2
$$
where $|\xi|$ is the norm of $\xi$
determined by the Riemannian metric of~$M$.

Observe that $H$ is compactly supported.
Consider the Hamiltonian flow of $H$
restricted to the set 
$$
B^*M\times B^*I_1\subset T^*M\times T^*I_2= T^*N .
$$
On this set we have $ H(x,y,\xi,\eta) =|\xi|^2/2$.
Recall that the standard Hamiltonian of our 
Riemannian metric is given by $H_{Riem}(x,\xi) =|\xi|^2/2$.
Hence the Hamiltonian flow of $H$ restricted to
$B^*M\times B^*I_1$
is the product of the geodesic flow
of our Riemannian metric (restricted to $B^*M$)
and the trivial flow on $B^*I_1$.
Hence the metric entropy of this flow is positive.

Now let $U\subset\R^3$ be an open set diffeomorphic
to $N$ (namely $U$ is a thickened handlebody).
A diffeomorphism between $N$ and $U$ induces
a symplectomorphism between $T^*N$ and $T^*U$.
This symplectomorphism sends the Hamiltonian
flow of $H$ to a compactly supported
Hamiltonian flow on $T^*U\subset T^*\R^3\simeq\R^6$.
The support of this flow fits into $D^6$
upon a suitable rescaling.

Thus there exists a Hamiltonian flow $\{\Phi^t\}$ on $D^6$
which vanishes near the boundary and has positive
metric entropy. 
Finally, in order to construct the 
desired symplectomorphism $\theta$,
we take a map $\Phi^t$ for a sufficiently small $t>0$.
\end{proof}

\begin{proof}[Proof of Theorem \ref{t:entropy}]
Consider the sphere $S^4$ with its standard Riemannian metric
and let $D\subset S^4$ be a small geodesic ball.
Then $D$ is diffeomorphic to $D^4$ and the metric of $S^4$
restricts to a simple Riemannian metric on~$D$.
Using the notations introduced before Theorem \ref{t:lens},
consider the dual lens map $\sigma\co U^*_{in}\to U^*_{out}$.

Fix a co-vector $\alpha\in U^*_{in}$ and let $\omega=\beta^*(\alpha)$.
Let $V\Subset U^*_{in}$ be a neighborhood of $\alpha$
in $U^*_{in}$.
We choose $V$ to be symplectomorphic to a rescaled copy
of the interior of the standard disc $D^6$.
By Lemma \ref{l:D6} there exists a compactly supported
symplectomorphism $\theta\co V\to V$ with positive metric
entropy and arbitrarily close to the identity map in
the $C^\infty$ topology.
We extend $\theta$ to a map from the entire $U^*_{in}$
to itself so that it is identity on $U^*_{in}\setminus V$.
We use the same letter $\theta$ for this extension.

Define a perturbed dual lens map 
$\tilde\sigma\co U^*_{in}\to U^*_{out}$
by $\tilde\sigma = \sigma\circ\theta$.
If $\theta$ is sufficiently close to identity,
then by Theorem \ref{t:lens} the map $\tilde\sigma$
can be realized as the dual lens map of
a Finsler metric $\tilde\phi$. 
Furthermore $\tilde\phi$ can be chosen close
to the original Riemannian metric and coinciding
with it in a neighborhood of the boundary of~$D$.
We paste the metric $\tilde\phi$ on $D$ with
the original metric on $S^4\setminus D$
and obtain a smooth Finsler metric on $S^4$.
This metric is the desired perturbation with positive entropy.

Indeed, since the original geodesic flow is periodic,
the return map of the perturbed geodesic flow equals
$\theta$ on~$V$ (and remains the identity elsewhere).
Thus the perturbed flow contains an invariant open set
where the flow is a suspension of $\theta$.
Hence the perturbed geodesic flow has positive
metric entropy. 
\end{proof}

The disc $D$ in the proof of Theorem \ref{t:entropy}
can be chosen arbitrarily small. Thus the perturbation
of the geodesic flow is confined to an arbitrarily small
neighborhood of one (periodic) orbit of the original flow.
This allows us to construct an example with local generation
of metric entropy:

\begin{corollary}
\label{c:nonexpansive}
There exists a smooth perturbation
of the standard metric of $S^4$ 
(in the class of reversible Finsler metrics)
such that the following holds.
There is a periodic trajectory $\gamma$ such that
for every $\ep>0$ the $\ep$-neighborhood of $\gamma$
contains an open invariant set in which the metric
entropy is positive.
In particular, the flow is entropy non-expansive
in the sense of \cite{Bowen}.
\end{corollary}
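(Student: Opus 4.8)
The plan is to run the proof of Theorem \ref{t:entropy} not once but at infinitely many scales accumulating onto a single periodic orbit. As there, let $D\subset S^4$ be a small geodesic ball, so that the round metric restricts to a simple Riemannian metric on $D$, and let $\sigma\co U^*_{in}\to U^*_{out}$ be its dual lens map (here $\dim U^*_{in}=2n-2=6$). Fix a covector $\alpha\in U^*_{in}$ in the interior and choose pairwise disjoint balls $W_1,W_2,\dots\subset U^*_{in}$ with $\alpha\notin\overline{W_k}$ but $\operatorname{dist}(\alpha,W_k)+\operatorname{diam}(W_k)\to 0$, each $W_k$ symplectomorphic (Darboux) to a rescaled copy of the interior of the standard $D^6$. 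Inside each $W_k$ apply Lemma \ref{l:D6} to produce a compactly supported symplectomorphism with positive metric entropy, carried by an invariant open set $\Omega_k\subset W_k$, and of $C^k$-size $\le 2^{-k}$ (say); to keep the final metric reversible, perform this $\iota$-symmetrically, where $\iota:=(-\mathrm{id})\circ\sigma$ is the (anti-symplectic, and, for small $D$, fixed-point-free) involution $\iota(\beta)=-\sigma(\beta)$ of $U^*_{in}$: pair the piece inside $W_k$ with its $\iota$-conjugate piece inside $\iota(W_k)$, obtaining $\theta_k$ with $\iota\theta_k\iota=\theta_k^{-1}$. Since all supports are disjoint, the map $\theta$ equal to $\theta_k$ on $W_k\cup\iota(W_k)$ and to the identity elsewhere is a well-defined symplectomorphism of $U^*_{in}$; the decay of the $C^k$-sizes makes $\theta$ globally $C^\infty$ (it is the identity at the accumulation point $\alpha$) and as $C^\infty$-small as desired, and $\iota\theta\iota=\theta^{-1}$ still holds. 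Then $\tilde\sigma:=\sigma\circ\theta$ is a small compactly supported symplectic perturbation of $\sigma$ which satisfies the symmetry hypothesis of Theorem \ref{t:lens} (because $\iota\theta\iota=\theta^{-1}$ is exactly equivalent to $-\tilde\sigma(-\tilde\sigma(\beta))=\beta$), so Theorem \ref{t:lens} (with $n=4\ge 3$) yields a reversible simple metric $\tilde\phi$ on $D$, equal to the round metric near $\partial D$, with dual lens map $\tilde\sigma$; pasting it with the round metric on $S^4\setminus D$ produces the desired reversible Finsler metric on $S^4$.

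Next I would identify $\gamma$ and verify the entropy. Since every round geodesic is a great circle meeting the small ball $D$ in a single chord, the round first-return map to $U^*_{in}$ is the identity; consequently the first-return map of the perturbed flow to $U^*_{in}$ equals $\theta$ (enter $D$ at $\beta$, exit at $\tilde\sigma(\beta)=\sigma(\theta\beta)$, follow the round great circle outside $D$, re-enter at $\sigma^{-1}(\sigma(\theta\beta))=\theta\beta$). As $\theta(\alpha)=\alpha$, the orbit $\gamma$ through $\alpha$ closes up and is periodic for the perturbed flow. For each $k$, the perturbed flow on the flow-out of $\Omega_k$ is (conjugate to) the suspension of $\theta_k|_{\Omega_k}$ by the return-time function; hence by the Abramov formula it carries positive metric entropy on an open invariant set $A_k$. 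As $A_k$ lies in the flow-out of $W_k$ over one bounded return period, and $\operatorname{dist}(\alpha,W_k)+\operatorname{diam}(W_k)\to 0$, the sets $A_k$ shrink to $\gamma$; so for every $\ep>0$ we have $A_k\subset N_\ep(\gamma)$ once $k$ is large, which is the assertion of the corollary. Entropy non-expansiveness in the sense of \cite{Bowen} then follows by the standard argument: the topological entropy of the flow, being already positive on sets $A_k$ that are (transversally to the flow) confined to arbitrarily thin tubes about the single orbit $\gamma$, is not captured at any fixed positive scale, so no $\ep_0$ can witness $h$-expansiveness.

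The genuinely new work, and the main place to be careful, is the bookkeeping that assembles the infinite family $\{\theta_k\}$ into one honestly $C^\infty$-small perturbation: rescaling a fixed symplectomorphism into an ever-smaller ball inflates its higher derivatives, so one must make each $\theta_k$ $C^m$-small enough to beat both the shrinking of $W_k$ and the index $m$, and check that disjointness of supports lets these individual bounds control $\|\theta-\mathrm{id}\|_{C^m}$. A secondary point is the reversibility arrangement: since $\iota$ is anti-symplectic and has no fixed point on $U^*_{in}$ for small $D$, one cannot localize a reversible perturbation at a single covector and is forced into the paired $W_k\leftrightarrow\iota(W_k)$ construction above (this is also what is needed in the proof of Theorem \ref{t:entropy} itself). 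Everything else is already available: Lemma \ref{l:D6} supplies each $\theta_k$, Theorem \ref{t:lens} realizes $\tilde\sigma$ by a reversible $\tilde\phi$, and the passage from ``positive entropy in arbitrarily thin tubes about $\gamma$'' to Bowen's entropy non-expansiveness is classical (this is, after all, Bowen's own mechanism).
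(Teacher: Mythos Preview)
Your proof is correct but follows a genuinely different route from the paper. The paper works in configuration space: it picks a sequence of pairwise disjoint great circles $\gamma_i$ converging to a limit great circle~$\gamma$, applies Theorem~\ref{t:entropy} as a black box in disjoint tubular neighborhoods $U_i\supset\gamma_i$ (choosing the $i$th perturbation small enough in $C^i$ that the union is still $C^\infty$), and then simply observes that the positive-entropy invariant set produced inside each $U_i$ sits in an arbitrarily thin tube about~$\gamma$. You instead work entirely in the phase-space cross-section $U^*_{in}$ of a single ball~$D$: the infinitely many positive-entropy pieces are assembled into one compactly supported symplectomorphism~$\theta$ accumulating at a single covector~$\alpha$, and Theorem~\ref{t:lens} is invoked once. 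The paper's argument is shorter precisely because it cites Theorem~\ref{t:entropy} countably many times rather than re-running it; your argument buys two things in exchange for the extra work: it makes explicit the reversibility pairing $W_k\leftrightarrow\iota(W_k)$ and the resulting identity $\iota\theta\iota=\theta^{-1}$ (a point the paper's proof of Theorem~\ref{t:entropy} leaves entirely implicit), and it isolates cleanly where the delicate $C^\infty$ bookkeeping lives (at the single accumulation point~$\alpha$ rather than spread over infinitely many metric patches on~$S^4$).
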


\begin{proof}
Begin with a sequence
$\{\gamma_i\}$ of disjoint geodesics in $S^4$
converging to a geodesic $\gamma$.
For each $i$ choose a neighborhood $U_i$ of $\gamma_i$
in such a way that these neighborhoods are disjoint.
Construct a small perturbation of
the metric within each $U_i$ as in Theorem \ref{t:entropy}.
These perturbations should be so small that
their derivatives go to zero as $i\to\infty$.
Then the union of these perturbations provides the desired example.
\end{proof}

Note that both metric and topological entropy in the 
$\ep$-neighborhoods in Corollary \ref{c:nonexpansive}
must vanish as $\ep\to0$.
This follows from the fact that $\ep$-local entropy 
of any $C^\infty$ flow tends to zero as $\ep\to 0$, see~\cite{Newhouse}.

\bibliographystyle{plain}

\end{document}